\title[On the Calderon-Zygmund property]{On the Calderon-Zygmund property of Riesz-transform type operators arising in nonlocal equations}
\author{S. Yeepo}
\address[Sasikarn Yeepo]{Department of Mathematics and Computer Science, Faculty of Science, Chulalongkorn University, Bangkok 10330, Thailand}
\email{6072857023@student.chula.ac.th}
\author{W. Lewkeeratiyutkul}
\address[Wicharn Lewkeeratiyutkul]{Department of Mathematics and Computer Science, Faculty of Science, Chulalongkorn University, Bangkok 10330, Thailand}
\email{wicharn.l@chula.ac.th}
\author{S. Khomrutai}
\address[Sujin Khomrutai]{Department of Mathematics and Computer Science, Faculty of Science, Chulalongkorn University, Bangkok 10330, Thailand}
\email{sujin.k@chula.ac.th}
\author{A. Schikorra}
\address[Armin Schikorra]{Department of Mathematics,
University of Pittsburgh,
301 Thackeray Hall,
Pittsburgh, PA 15260, USA}
\email{armin@pitt.edu}
\def\eps{\varepsilon}
\newtheorem{theorem}{Theorem}
\newtheorem{lemma}[theorem]{Lemma}
\newtheorem{proposition}[theorem]{Proposition}
\theoremstyle{definition}
\newtheorem{remark}[theorem]{Remark}
\newcommand{\R}{\mathbb{R}}
\newcommand{\brac}[1]{\left (#1 \right )}
\newcommand{\abs}[1]{\left |#1 \right |}
\newcommand{\barint}{
\rule[.036in]{.12in}{.009in}\kern-.16in \displaystyle\int }
\newcommand{\barcal}{\mbox{$ \rule[.036in]{.11in}{.007in}\kern-.128in\int $}}
\def\mvint_#1{\mathchoice
          {\mathop{\vrule width 6pt height 3 pt depth -2.5pt
                  \kern -8pt \intop}\nolimits_{\kern -3pt #1}}%
          {\mathop{\vrule width 5pt height 3 pt depth -2.6pt
                  \kern -6pt \intop}\nolimits_{#1}}%
          {\mathop{\vrule width 5pt height 3 pt depth -2.6pt
                  \kern -6pt \intop}\nolimits_{#1}}%
          {\mathop{\vrule width 5pt height 3 pt depth -2.6pt
                  \kern -6pt \intop}\nolimits_{#1}}}
\numberwithin{theorem}{section} \numberwithin{equation}{section}
\newcommand{\lap}{\Delta }
\newcommand{\aleq}{\lesssim}
\newcommand{\ageq}{\gtrsim}
\newcommand{\aeq}{\approx}
\newcommand{\Rz}{\mathcal{R}}
\newcommand{\laps}[1]{(-\lap) ^{\frac{#1}{2}}}
\newcommand{\lapms}[1]{I^{#1}}
\begin{document}
\begin{abstract}
We show that the operator
\[
 T_{K,s_1,s_2}f(z) := \int_{\R^n} A_{K,s_1,s_2}(z_1,z_2) f(z_2)\, dz_2
\]
is a Calderon-Zygmund operator. Here for $K \in L^\infty(\R^n \times \R^n)$, and $s,s_1,s_2 \in (0,1)$ with $s_1+s_2 = 2s$ we have
\[
  A_{K,s_1,s_2}(z_1,z_2) = \int_{\R^n} \int_{\R^n} \frac{K(x,y) \brac{|x-z_1|^{s_1-n} -|y-z_1|^{s_1-n}}\, \brac{|x-z_2|^{s_2-n} -|y-z_2|^{s_2-n}}}{|x-y|^{n+2s}}\, dx\, dy.
\]
This operator is motivated by the recent work \cite{MSY20} where it appeared as analogue of the Riesz transforms for the equation
\[
 \int_{\R^n} \int_{\R^n} \frac{K(x,y) (u(x)-u(y))\, (\varphi(x)-\varphi(y))}{|x-y|^{n+2s}}\, dx\, dy = f[\varphi].
\]
\end{abstract}

\maketitle

\section{Introduction}

Riesz transforms arise naturally in linear PDEs of divergence form
\begin{equation}\label{eq:divform}
\sum_{i=1}^{n} \partial_i (A_{ij} \partial_i u) = g  \quad \text{in $\R^n$}
\end{equation}
or nondivergence form
\begin{equation}\label{eq:nondivform}
\sum_{i,j=1}^{n} A_{ij} \partial_i (\partial_j u) = g \quad \text{in $\R^n$}.
\end{equation}
They are the ``zero-order''  structural part of the PDE. 
For example, for the divergence form equation \eqref{eq:divform}, 
if we set $f := \laps{1} u$ and apply the Riesz potential $\lapms{1}$ to equation \eqref{eq:divform} (for a definition of these operators, see \Cref{s:L2}), then \eqref{eq:divform} at least formally is equivalent to
\[
\sum_{i=1}^{n} \Rz_i (A_{ij} \Rz_i f) = \lapms{1}g \quad \mbox{in } \R^n.
\]
In particular, if we define the operator
\[
Tf := \sum_{i=1}^{n} \Rz_i (A_{ij} \Rz_i f)
\]
then \eqref{eq:divform} is equivalent to the equation
\[
 T (\laps{1} u) = \lapms{1} g \quad \text{in $\R^n$}.
\]
Existence, regularity, and uniqueness of linear PDE \eqref{eq:divform} or \eqref{eq:nondivform} (or their adaptations with lower order terms) are thus intrinsically related to harmonic analysis results on boundedness and invertibility of the operator $T$. 
The most basic property (which is an almost trivial corollary of the boundedness of the Riesz transforms on any $L^p$-space) is that $T$ is a bounded linear operator from $L^p(\R^n)$ to $L^p(\R^n)$ if  $A$ is bounded, and $T$ has a bounded inverse if $A$ is elliptic and belongs to VMO. Indeed, for example, using purely tools from harmonic analysis, \cite{IS98} showed the Calderon-Zygmund $L^p$-theory for elliptic, bounded and $VMO$-coefficents $A$. 

In this note, we want to study the analogue of the operator $T$ in the setting of a popular nonlocal linear equation, namely
\begin{equation}\label{eq:nonloc}
 \mathcal{L}^s_{K}(u,\varphi) := \int_{\R^n} \int_{\R^n} \frac{K(x,y)\ (u(x)-u(y))\, (\varphi(x)-\varphi(y))}{|x-y|^{n+2s}}\, dx\, dy = \int_{\R^n} f(z)\, \varphi(z)\,dz.
\end{equation}
Here we assume that $K(x,y)=K(y,x)$ is measurable in $x$ and $y$ and ellipticity would mean that there exist $\lambda, \Lambda > 0$ such that $\lambda \le K(x,y) \le \Lambda$ for all $x, y \in \R^n$.

Such equations have been studied by numerous authors, we name just a few \cite{K09,DONG20121166,FKV15,KMS15,gale_ofa485955384,Warma,Nowak19,Fall18,euclideuclid.tunis/1543854680,CK20,Nowak20}.

In \cite{MSY20} two of the authors in a joint work with Mengesha, introduced a natural analogue of the operator $T$ -- associated to \eqref{eq:nonloc}: set $A_{K,s_1,s_2}(z_1,z_2)$ to be the following double integral
\[
 \int_{\R^n} \int_{\R^n} K(x,y) \frac{\brac{|x-z_1|^{s_1-n}-|y-z_1|^{s_1-n}}\, \brac{|x-z_2|^{s_2-n}-|y-z_2|^{s_2-n}}}{|x-y|^{n+2s}}\, dx\, dy
\]
where $0< s_1,s_2 < 1$ with $s_1 + s_2 = 2s$, and set
\[
 T_{K,s_1,s_2} f(z_1) := \int_{\R^n} A_{K,s_1,s_2}(z_1,z_2) f(z_2)\, dz_2, \quad z_1 \in \R^n.
\]
Then one can show (and this was crucially used in \cite{MSY20}) that solutions of \eqref{eq:nonloc} satisfy
\[
 T_{K,s_1,s_2} (\laps{s_1} u) = \lapms{s_2} f \quad \text{in $\R^n$}.
\]
Again, existence, uniqueness and regularity to equations \eqref{eq:nonloc} are related to boundedness and invertibility of $T_{K,s_1,s_2}$. 

The main result of this work is to show that $T_{K,s_1,s_2}$ is a \emph{Calderon-Zygmund operator} and is in particular a bounded linear operator from $L^p(\R^n)$ to $L^p(\R^n)$. Namely we have
\begin{theorem}\label{th:cz}
	Let $K \in L^\infty(\R^n\times \R^n)$. Then $T_{K,s_1,s_2}$ is a Calderon-Zygmund operator. In particular, for any $p \in (1,\infty)$, we have
	\begin{enumerate}
		\item $\|T_{K,s_1,s_2} f\|_{L^{1,\infty}(\R^n)} \leq C\, \|K\|_{L^\infty(\R^n\times \R^n)}\, \|f\|_{L^1(\R^n)}$,
		\item $\|T_{K,s_1,s_2} f\|_{L^p(\R^n)} \leq C\, \|K\|_{L^\infty(\R^n\times \R^n)}\, \|f\|_{L^p(\R^n)}$,
		\item $[T_{K,s_1,s_2} f]_{BMO(\R^n)} \leq C\, \|K\|_{L^\infty(\R^n\times \R^n)}\, \|f\|_{L^\infty(\R^n)}$.
	\end{enumerate}
\end{theorem}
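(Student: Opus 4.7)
The plan is to verify the three standard hypotheses of the Calderón-Zygmund theorem for the kernel $A_{K,s_1,s_2}$: $L^2$-boundedness of $T_{K,s_1,s_2}$, the pointwise size bound $|A_{K,s_1,s_2}(z_1,z_2)| \lesssim \|K\|_{L^\infty} |z_1-z_2|^{-n}$, and a Hörmander-type smoothness bound in each variable. From these, assertions (1)-(3) are the classical conclusions of Calderón-Zygmund theory for singular integrals.

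For $L^2$-boundedness, I would dualize. By Fubini, using that $c_{n,s}\, \lapms{s} h(x) = \int_{\R^n} |x-z|^{s-n}h(z)\,dz$ characterizes the Riesz potential, one obtains for Schwartz $f,g$
\[
 \langle T_{K,s_1,s_2} f, g\rangle_{L^2(\R^n)} \;=\; c_{n,s_1,s_2}\, \mathcal{L}^s_K\bigl(\lapms{s_1} g,\ \lapms{s_2} f\bigr).
\]
The hypothesis $s_1+s_2=2s$ permits the factorization $|x-y|^{-(n+2s)} = |x-y|^{-(n+2s_1)/2}\,|x-y|^{-(n+2s_2)/2}$, and Cauchy-Schwarz in the $(x,y)$-integral then produces the asymmetric bilinear bound
\[
 |\mathcal{L}^s_K(u,v)| \;\le\; \|K\|_{L^\infty}\, [u]_{\dot{H}^{s_1}}\,[v]_{\dot{H}^{s_2}}.
\]
Applied with $u=\lapms{s_1}g$ and $v=\lapms{s_2}f$, the identity $\laps{s_i}\lapms{s_i}=\id$ collapses both Gagliardo seminorms to plain $L^2$-norms, yielding $|\langle T_{K,s_1,s_2}f,g\rangle| \lesssim \|K\|_{L^\infty}\|f\|_{L^2}\|g\|_{L^2}$.

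Both kernel estimates rely on the same pointwise tool, a mean-value-type inequality
\[
 \bigl||x-z|^{s_i-n}-|y-z|^{s_i-n}\bigr| \;\lesssim\; |x-y|\,\sup_{t\in[0,1]}|tx+(1-t)y-z|^{s_i-n-1},
\]
usable when the segment $[x,y]$ stays away from $z$, together with the triangle-inequality fallback $||x-z|^{s_i-n}-|y-z|^{s_i-n}|\le|x-z|^{s_i-n}+|y-z|^{s_i-n}$ for the complementary configurations. For the size estimate, by translation and scaling one reduces to $z_1=0$, $|z_2|=1$; the $(x,y)$-integral is then split into regions according to which of $|x-y|$, $|x-z_1|$, $|x-z_2|$ dominates, and each piece is bounded by a Riesz-potential-type integral, with the two gained factors of $|x-y|$ absorbing the $|x-y|^{-(n+2s)}$ diagonal singularity (here $0<s<1$ is used). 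For the Hörmander-type smoothness
\[
 |A_{K,s_1,s_2}(z_1,z_2)-A_{K,s_1,s_2}(z_1',z_2)| \;\lesssim\; \|K\|_{L^\infty}\,\frac{|z_1-z_1'|^\alpha}{|z_1-z_2|^{n+\alpha}}, \qquad |z_1-z_1'|\le \tfrac12|z_1-z_2|,
\]
for some $\alpha\in(0,1)$ (and its analogue in $z_2$), I would replace the first kernel factor by its $z_1$-increment $(|x-z_1|^{s_1-n}-|x-z_1'|^{s_1-n})-(|y-z_1|^{s_1-n}-|y-z_1'|^{s_1-n})$ and use Hölder continuity of $z\mapsto|w-z|^{s_1-n}$ (away from $w$) to extract an extra $|z_1-z_1'|^\alpha$-gain; essentially the same region decomposition as in the size bound then closes the estimate.

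Items (1)-(3) are the standard consequences of $L^2$-boundedness plus the two kernel bounds via the classical Calderón-Zygmund theorem. The main obstacle is the \emph{Hörmander estimate}: the kernel $A_{K,s_1,s_2}$ is already a double singular integral coupling $z_1$ and $z_2$ through the competing scales $|x-y|$, $|x-z_1|$, $|x-z_2|$, and one must extract the additional Hölder gain in $|z_1-z_1'|$ without sacrificing the $|z_1-z_2|^{-n}$ base scaling obtained in the size bound. A careful, systematic region decomposition at both levels is the technical heart of the argument.
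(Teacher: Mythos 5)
Your overall architecture coincides with the paper's: $L^2$-boundedness by duality plus the size and Hölder-regularity bounds for $A_{K,s_1,s_2}$, and then the classical Calderón--Zygmund theorem for (1)--(3). Your $L^2$ argument (Fubini, the asymmetric factorization $|x-y|^{-(n+2s)}=|x-y|^{-\frac{n+2s_1}{2}}|x-y|^{-\frac{n+2s_2}{2}}$, Cauchy--Schwarz, and the identification $[\lapms{t}g]_{W^{t,2}}\approx \|g\|_{L^2}$) is exactly Proposition 2.6, and your size bound (mean-value inequality where the segment stays away from $z$, triangle-inequality fallback otherwise, then a region decomposition according to which of $|x-y|,|x-z_1|,|x-z_2|$ dominates) is the same scheme as the paper's sets $\mathcal{A}_i\cap\mathcal{B}_j\cap\mathcal{I}_k$ in Proposition 3.3.

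There is, however, a genuine gap in your plan for the regularity estimate. Bounding the second difference $\brac{|x-z_1'|^{s_1-n}-|x-z_1|^{s_1-n}}-\brac{|y-z_1'|^{s_1-n}-|y-z_1|^{s_1-n}}$ purely by Hölder continuity in the $z$-variable, i.e. by $|z_1-z_1'|^{\alpha}\brac{|x-z_1|^{s_1-\alpha-n}+|y-z_1|^{s_1-\alpha-n}}$, destroys the $x$--$y$ cancellation in the first factor. In the region $|x-y|\ll\min\{|x-z_1|,|x-z_2|\}$ the second factor gains at most one power of $|x-y|$, so the $y$-integration produces $\int_{|x-y|\le R}|x-y|^{1-n-2s}\,dy$, which diverges whenever $2s\ge 1$; as written your argument closes only for $s<\frac12$, while the theorem covers all admissible $s_1,s_2\in(0,1)$. (The obstruction is this near-diagonal integrability in $|x-y|$, not the preservation of the $|z_1-z_2|^{-n}$ scaling that you single out.) What is needed --- and what is the paper's main technical tool, Lemma 3.2 --- is an interpolated mean-value estimate extracting $|h|^{\alpha}$ while \emph{simultaneously} retaining either the difference structure $\abs{|x-z_1+h|^{s_1-\alpha-n}-|y-z_1+h|^{s_1-\alpha-n}}$ or an explicit factor $|x-y|^{\sigma}$ with $\sigma>2s-1$ (the paper takes $\sigma\in(s_1+\theta,2s)$), at the cost of the worse weight $\min\{|x-z_1|,|y-z_1|\}^{s_1-\alpha-\sigma-n}$; this is precisely why the paper introduces the two auxiliary kernels $\kappa^{\alpha,\sigma}_1$ and $\kappa^{\alpha,\sigma}_2$ and runs the full region analysis for both in Proposition 3.3 (together with a separate case when $|h|$ is not small compared with the distances to $z_1$, which your pointwise Hölder bound also does not cover, since $|h|\le\frac12|z_1-z_2|$ does not keep $x,y$ away from $z_1$). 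To repair your plan you must either prove such a combined $|h|^{\alpha}$--$|x-y|^{\sigma}$ gain, e.g. by interpolating the two possible groupings of the second difference, or restrict to $2s<1$.
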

Here $L^{1,\infty}(\R^n)$ denotes the \emph{weak $L^1$ space} defined as the set of measurable function $g$ such that
\[
\|g\|_{L^{1,\infty}(\R^n)} := \sup_{\lambda>0} \lambda |\{ x \in \R^n : |g(x)| > \lambda \}| < \infty
\]
and $BMO(\R^n)$ denotes the set of locally integrable functions on $\R^n$ which is of \emph{bounded mean oscillation}, that is, $b \in BMO(\R^n)$ if and only if
\[
[b]_{BMO(\R^n)} := \sup_Q \frac{1}{|Q|}\int_Q \left|b(x) - \frac{1}{|Q|}\int_Q b(y)\,dy \right|\,dx < \infty,
\]	
where the supremum is taken over all cubes $Q$ in $\R^n$. 

We establish \Cref{th:cz} by proving that $T_{K,s_1,s_2}$ is bounded from $L^2$ to itself, see \Cref{la:L2bound} below, and showing that $A_{K,s_1,s_2}$ is a \emph{standard kernel}, cf. \cite[Definition 4.1.2]{GMF}, which is the main contribution of this work. 
\begin{proposition}\label{pr:se:1}
	For any  $z_1 \neq z_2 \in \R^n$ we have $A_{K,s_1,s_2}$ satisfies the size condition 
	\begin{equation}\label{eq:CZ:g1}
	|A_{K,s_1,s_2}(z_1,z_2)| \aleq  \frac{\|K\|_{L^\infty(\R^n\times \R^n)}}{|z_1-z_2|^{n}}  
	\end{equation}
	and for some $\alpha > 0$  the regularity conditions 
	\begin{equation}\label{eq:CZ:g2}
	\left |A_{K,s_1,s_2}(z_1+h,z_2) - A_K(z_1,z_2) \right | \aleq  \frac{|h|^{\alpha}\|K\|_{L^\infty(\R^n\times \R^n)}}{|z_1-z_2|^{n+\alpha}}
	\end{equation}
	whenever $|h| \le \frac{1}{2} |z_1-z_2|$ and 
	\begin{equation}\label{eq:CZ:g3}
	\left |A_{K,s_1,s_2}(z_1,z_2+h) - A_K(z_1,z_2) \right| \aleq  \frac{|h|^{\alpha}\|K\|_{L^\infty(\R^n\times \R^n)}}{|z_1-z_2|^{n+\alpha}}
	\end{equation}
	whenever $|h| \le \frac{1}{2} |z_1-z_2|$.
\end{proposition}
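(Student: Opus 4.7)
The plan is to prove \Cref{pr:se:1} by direct estimation, first reducing to a normalized configuration via the natural scaling. The change of variables $x=\lambda x'+z_1$, $y=\lambda y'+z_1$ with $\lambda:=|z_1-z_2|$, combined with the balance $s_1+s_2=2s$, yields
\[
A_{K,s_1,s_2}(z_1,z_2)=|z_1-z_2|^{-n}\,A_{\widetilde K,s_1,s_2}(0,e),\qquad e:=\frac{z_2-z_1}{|z_2-z_1|},
\]
where $\widetilde K(x,y):=K(\lambda x+z_1,\lambda y+z_1)$ satisfies $\|\widetilde K\|_\infty=\|K\|_\infty$. The same scaling converts \eqref{eq:CZ:g2} into an estimate for $|A_{\widetilde K,s_1,s_2}(h',e)-A_{\widetilde K,s_1,s_2}(0,e)|$ with $|h'|\leq 1/2$, so it suffices to work in the normalized configuration $z_1=0$, $|z_2|=1$. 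The key analytic input is a pair of pointwise bounds on the Riesz-type kernel $G_{\sigma,z}(w):=|w-z|^{\sigma-n}$: for $\sigma\in(0,1)$ and $\beta\in[0,1]$,
\[
\bigl||x-z|^{\sigma-n}-|y-z|^{\sigma-n}\bigr|\aleq |x-y|^{\beta}\min(|x-z|,|y-z|)^{\sigma-n-\beta}
\]
when $|x-y|\leq \frac{1}{2}\min(|x-z|,|y-z|)$ (via the mean value inequality), complemented by the trivial bound $|x-z|^{\sigma-n}+|y-z|^{\sigma-n}$ in the complementary regime; an analogous estimate $||x-z-h|^{\sigma-n}-|x-z|^{\sigma-n}|\aleq |h|^\alpha |x-z|^{\sigma-n-\alpha}$ (for $|h|\leq |x-z|/2$ and $\alpha\in[0,1]$) will control shifts in the parameter $z$.

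For the size estimate, writing $F_i(x,y):=|x-z_i|^{s_i-n}-|y-z_i|^{s_i-n}$, I would choose exponents $\beta_i\in(s_i,1)$ with $\beta_1+\beta_2=2s+2\eta$ for some small $\eta>0$; this is possible precisely because each $s_i<1$. On the main region where $|x-y|$ is small compared to all four distances $|x-z_i|,|y-z_i|$, the above bounds yield
\[
\frac{|F_1(x,y)\,F_2(x,y)|}{|x-y|^{n+2s}}\aleq \frac{|x-y|^{2\eta-n}}{|x-z_1|^{n+\eta-s_1}\,|x-z_2|^{n+\eta-s_2}},
\]
which integrates first in $y$ (using $2\eta>0$ for local integrability of $|x-y|^{2\eta-n}$) and then in $x$, split according to which of $|x-z_1|,|x-z_2|$ is smaller, to a bound $\aleq |z_1-z_2|^{-n}$. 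The complementary regions, where $|x-y|$ exceeds one of $|x-z_i|$ or $|y-z_i|$, are handled by invoking the trivial bound on the corresponding $F_i$ and repeating the same polar-coordinate analysis; the symmetries $x\leftrightarrow y$ and $(s_1,z_1)\leftrightarrow(s_2,z_2)$ cut the case analysis substantially.

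For the regularity estimate \eqref{eq:CZ:g2}, I would rewrite $A_{K,s_1,s_2}(z_1+h,z_2)-A_{K,s_1,s_2}(z_1,z_2)$ by replacing $F_1(x,y)$ with the double difference $(|x-z_1-h|^{s_1-n}-|x-z_1|^{s_1-n})-(|y-z_1-h|^{s_1-n}-|y-z_1|^{s_1-n})$ and apply the parameter-translation bound to gain a factor $|h|^\alpha$, at the cost of shifting the exponent on $|x-z_1|$ (respectively $|y-z_1|$) down by $\alpha$. Running the size-estimate argument with $\beta_i$ slightly re-tuned and $\alpha>0$ chosen small enough produces the desired bound $|h|^\alpha|z_1-z_2|^{-n-\alpha}$; the pathological subregions $|x-z_1|\leq 2|h|$ or $|y-z_1|\leq 2|h|$ are small enough to be estimated directly (splitting off this portion before applying the $|h|^\alpha$ bound). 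The estimate \eqref{eq:CZ:g3} follows from the symmetry of $A_{K,s_1,s_2}$ under $(s_1,z_1)\leftrightarrow(s_2,z_2)$. The principal obstacle is the \emph{borderline} nature of the scaling: $s_1+s_2=2s$ is exactly what makes $A_{K,s_1,s_2}$ homogeneous of degree $-n$, which is why one must take $\beta_i$ strictly larger than $s_i$ (forcing $\eta>0$) and $\alpha$ strictly positive rather than at the natural ceiling. The heart of the technical work is the bookkeeping of which pointwise bound applies on each of several overlapping integration regions and the verification that each of the resulting integrals genuinely converges to the advertised power of $|z_1-z_2|$.
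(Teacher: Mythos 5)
Your scaling normalization and your treatment of the size condition \eqref{eq:CZ:g1} are sound and essentially the route the paper takes: first--difference bounds for $|w|^{s_i-n}$ from the mean value theorem (the paper's \Cref{la:fundthm}), a decomposition of $\R^n\times\R^n$ according to which of $|x-y|$, $|x-z_i|$, $|y-z_i|$ is smallest, and power counting in each region (the paper's \Cref{pr:se:1v2} with $\alpha=0$, without the normalization $|z_1-z_2|=1$).

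The regularity estimate \eqref{eq:CZ:g2}, however, has a genuine gap. You bound the double difference $\bigl(|x-z_1-h|^{s_1-n}-|x-z_1|^{s_1-n}\bigr)-\bigl(|y-z_1-h|^{s_1-n}-|y-z_1|^{s_1-n}\bigr)$ by applying the translation bound $\lesssim |h|^{\alpha}|x-z_1|^{s_1-n-\alpha}$ (respectively $|h|^{\alpha}|y-z_1|^{s_1-n-\alpha}$) to each bracket separately. This discards the cancellation in $x-y$ carried by the first factor, and that cancellation is exactly what makes the integral converge at the diagonal: after your step the only decay in $|x-y|$ comes from the second factor, $\bigl||x-z_2|^{s_2-n}-|y-z_2|^{s_2-n}\bigr|\lesssim |x-y|^{\beta_2}\min(|x-z_2|,|y-z_2|)^{s_2-n-\beta_2}$ with $\beta_2\le 1$ (a first difference of $|w|^{s_2-n}$ cannot yield more than one power of $|x-y|$), so the $y$-integral sees the singularity $|x-y|^{\beta_2-n-2s}$, which is non-integrable near $x=y$ as soon as $s_1+s_2\ge 1$. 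No re-tuning of $\beta_2$ or shrinking of $\alpha$ repairs this. What is needed is a mixed bound on the double difference that gains $|h|^{\alpha}$ while \emph{simultaneously} retaining either the $x$-versus-$y$ difference structure or an explicit factor $|x-y|^{\sigma}$ with $\sigma$ close to $2s$ (the paper needs $\sigma>s_1$ for the subsequent power counting to close); this is precisely the content of \Cref{la:mvf}(1), obtained by a mean value argument in $h$ whose gradient term is again split into a difference part and a part carrying $|x-y|^{\sigma}$. (Equivalently, a geometric-mean interpolation between the $h$-difference bound and the $x$--$y$-difference bound would serve.) The same issue touches your ``pathological'' region $|x-z_1|\lesssim |h|$ or $|y-z_1|\lesssim|h|$: near the diagonal one must still keep the $x$--$y$ cancellation while extracting $|h|^{\alpha}$, which the paper does in \Cref{la:mvf}(2). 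With such a lemma in hand, the remainder of your plan — feeding the resulting kernels into the size-type estimate as in \Cref{pr:se:1v2} and deducing \eqref{eq:CZ:g3} from the $(s_1,z_1)\leftrightarrow(s_2,z_2)$ symmetry — coincides with the paper's argument.
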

In view of \cite[Theorem~4.2.2, Theorem~4.2.7]{GMF}, \Cref{th:cz} is indeed a consequence of \Cref{la:L2bound} and \Cref{pr:se:1}.

\begin{remark}
An easy extension of \Cref{pr:se:1} is the case where $K=K(z_1,z_2,x,y)$ is bounded. However this does not lead immediately to an extension of \Cref{th:cz}, since the $L^2$-boundedness needs to be shown for that kernel. 
\end{remark}

As an application of our estimate in \Cref{th:cz} we obtain the following regularity results for ``almost constant coefficients'' (but without any further regularity assumption). To our knowledge this is new, although results somewhat similar in spirit (in the context of $L^2$-estimates) have been observed e.g. in \cite[Section 3]{Fall18}. Observe that we obtain this estimate at all differentiability scales below $1$.
\begin{theorem}\label{th:pdeappl}
	For any $s \in (0,1)$ and any $s_1,s_2 \in (0,1)$ with $s_1 + s_2 = 2s$, $s_1 \geq s$ and any $p \in [2,\infty)$ there exists $\eps > 0$ such that the following holds. 
	Let $K: \R^n \times \R^n \to [0,\infty)$ be measurable with $1-\frac{\inf K}{\sup K} < \eps$. Then if $u \in H^{s}(\R^n)$ and $f \in L^2(\R^n)$ solves
	\[
	\mathcal{L}^s_{K} u = \laps{s} f \quad \text{in $\R^n$}
	\]
	then whenever the right-hand side is finite,
	\[
	\|\laps{s_1} u\|_{L^p(\R^n)} \leq C\, \|\laps{s-s_2} f\|_{L^p(\R^n)}. 
	\]
	The smallness constant $\eps > 0$ is uniform in the following sense: If for some $\theta > 0$ we have that $s,s_1,s_2  \in (\theta,1-\theta)$, $p \in [2,\frac{1}{\theta})$ then $\eps$ depends only on $\theta$ and the dimension.
\end{theorem}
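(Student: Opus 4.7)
The plan is to rewrite the PDE as an operator equation of the form $T_{K,s_1,s_2}\, g = h$ and then invert $T_{K,s_1,s_2}$ on $L^p(\R^n)$ by perturbing off of the constant-coefficient case. Writing $g := \laps{s_1} u$, and noting that $s - s_2 = s_1 - s \geq 0$ (from $s_1+s_2 = 2s$ and $s_1 \geq s$), the first step is to establish the operator identity
\[
T_{K,s_1,s_2}(\laps{s_1} u) = \laps{s-s_2} f \quad \text{in $\R^n$,}
\]
by testing \eqref{eq:nonloc} with $\varphi := \lapms{s_2}\psi$ for $\psi \in C_c^\infty(\R^n)$ and repeating the algebraic manipulation of \cite{MSY20} that motivated the definition of $T_{K,s_1,s_2}$; the right-hand side then becomes $\int_{\R^n} \laps{s}f \cdot \lapms{s_2}\psi = \int_{\R^n}\laps{s-s_2}f \cdot \psi$.

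Next I would observe that $T_{K,s_1,s_2}$ depends linearly on $K$ and that, for a positive \emph{constant} $K \equiv K_0$, the operator $T_{K_0,s_1,s_2}$ is translation-, rotation- and scaling-invariant (the scaling invariance uses $s_1+s_2 = 2s$), hence a Fourier multiplier with constant symbol. The constant is pinned down by inserting into the first-step identity any $u$ for which $\mathcal{L}^s_{K_0} u = c_{n,s}K_0\, \laps{2s}u = \laps{s}(c_{n,s}K_0\, \laps{s} u)$ (i.e.\ $f = c_{n,s}K_0\, \laps{s}u$), yielding $T_{K_0,s_1,s_2}(\laps{s_1} u) = c_{n,s}K_0\, \laps{s_1}u$, and hence $T_{K_0,s_1,s_2} = c_{n,s} K_0 \cdot \id$ on $L^p(\R^n)$. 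Now take $K_0 := \sup K$ and decompose $K = K_0 - (K_0 - K)$ to obtain
\[
T_{K,s_1,s_2} = c_{n,s} K_0 \cdot \id - T_{K_0-K,s_1,s_2}.
\]
By \Cref{th:cz} applied to the bounded kernel $K_0-K$,
\[
\|T_{K_0-K,s_1,s_2}\|_{L^p \to L^p} \leq C_p\, \|K_0-K\|_{L^\infty} \leq C_p\, \eps\, K_0,
\]
so for $\eps < c_{n,s}/(2C_p)$ the Neumann series yields $T_{K,s_1,s_2}^{-1}:L^p \to L^p$ with norm at most $2/(c_{n,s} K_0)$. Combined with the first-step identity this gives
\[
\|\laps{s_1} u\|_{L^p} \leq \frac{2}{c_{n,s} K_0}\, \|\laps{s-s_2} f\|_{L^p}.
\]
Uniformity of $\eps$ with respect to $\theta$ follows because $c_{n,s}$ is bounded away from $0$ for $s \in (\theta, 1-\theta)$ and the $L^p$-CZ constant in \Cref{th:cz} is uniformly bounded for $p\in [2, 1/\theta)$.

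The main technical obstacle I anticipate is identifying $\laps{s_1} u$ with the $L^p$-solution of $T_{K,s_1,s_2}\, g = \laps{s-s_2}f$ produced by the Neumann series: the hypothesis $u \in H^s(\R^n)$ only a priori gives $\laps{s_1} u \in H^{s-s_1}(\R^n)$, a negative-order distribution space when $s_1 > s$, so the $L^p$-estimate cannot be invoked on it directly. I would circumvent this by first proving the estimate under the qualitative a priori assumption $\laps{s_1} u \in L^p(\R^n)$ (for which the absorption above applies), and then removing the assumption by approximating $f$ by mollifications $f_\delta \in C_c^\infty$, using Lax--Milgram to obtain solutions $u_\delta \in H^{s_1}(\R^n)$ satisfying $\laps{s_1} u_\delta \in L^2 \cap L^p$, applying the bound to $u_\delta$, and passing to the limit with the help of uniqueness of the $H^s$-solution.
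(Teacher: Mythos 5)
Your proposal is correct and follows essentially the same route as the paper: rewrite the equation as $T_{K,s_1,s_2}(\laps{s_1}u)=\laps{s-s_2}f$, use that for constant coefficients $T_{K_0,s_1,s_2}$ is a constant multiple of the identity, split off the small perturbation with $\|K_0-K\|_{L^\infty}\leq \eps\sup K$, and invert by a Neumann series using the $L^p$-bound of Theorem~\ref{th:cz}, with the same uniformity reasoning for $\eps$. The only point where you go beyond the paper is the final identification of $\laps{s_1}u$ with the $L^p$-solution (the paper simply asserts the $\dot H^{s_1,p}$ solution differs from $u$ by a constant, while you sketch an approximation/uniqueness argument), and both treatments are at a comparable level of detail.
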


{\bf Outline}
The $L^2$-boundedness of $T_{K,s_1,s_2}$ is proved in \Cref{s:L2}. In \Cref{s:SK}, we provide the computations that show that the kernel $A_{K,s_1,s_2}$ is a standard kernel. 
The application \Cref{th:pdeappl}, will be proven in the last section.

{\bf Notation}
$A \aleq B$ means there exists a constant $C > 0$ which is not depending on $A$ and $B$ such that $A \le CB$. $A \aeq B$ means that $A \aleq B$ and $B \aleq A$.

{\bf Acknowledgments} The authors acknowledge funding as follows
\begin{itemize}
	\item AS: Simons foundation, grant no 579261.
	\item SY: Science Achievement Scholarship of Thailand (SAST)
\end{itemize}
The research that lead to this work was carried out while SY was visiting the University of Pittsburgh.

\section{\texorpdfstring{$L^2$}{L2} Boundedness of \texorpdfstring{$T_{K,s_1,s_2}$}{T}}\label{s:L2}

First of all, we give definitions of the operators mentioned in the introduction.
For $1 \le i \le n$, the \textit{Riesz transform} $\Rz_i$ of a function $f$ in the Schwartz class is defined by
\begin{equation}
\mathcal{F}(\Rz_i f)(\xi) = c\, \frac{\xi_i}{|\xi|} \mathcal{F} f(\xi). 
\end{equation}
This operator appears as the derivative $\Rz_i := \partial_i \lapms{1}$ of the Riesz potential $\lapms{1}$.  
Here, $I^s$ denotes the \textit{Riesz potential of order $s$} which is defined by
\[
\mathcal{F}(\lapms{s} f)(\xi) := \frac{1}{c}\, |\xi|^{-s} \mathcal{F} f(\xi).
\]
This operator makes sense (for $f$ a function in the Schwartz class) if $0 \leq s < n$, because $|\xi|^{-s}$ is then locally integrable.
The inverse operator of the Riesz potential is the fractional Laplacian operator $(-\lap)^{\frac{s}{2}}$ for $s \in (0,2)$, i.e.,
$\lapms{s} = (-\lap)^{-\frac{s}{2}}$ and 
\[
\mathcal{F}(\laps{s} f)(\xi) = c\, |\xi|^s \mathcal{F} f(\xi).
\]

Moreover, these operators have a useful integral representation. For a function $f$ in the Schwartz class, the integral form of
the Riesz transform for $i=1,\ldots, n$ is
\[
(\Rz_i f)(x) = c \, \mathrm{P.V.} \int_{\R^n} \frac{x_i - y_i}{|x-y|^{n+1}} f(y)\,dy.
\]
The notation $\mathrm{P.V.}$ stands for the principal value of the integral. 
For the Riesz potential, we have
\[
(I^s f)(x) = c \, \mathrm{P.V.} \int_{\R^n} \frac{f(y)}{|x-y|^{n-s}}\,dy
\]
for $0 < s < n$. 
For $s \in (0,2)$, the fractional Laplacian $\laps{s}$ is defined by
\[
\laps{s} f(x) = c \, \mathrm{P.V.} \int_{\R^n} \frac{f(x)-f(y)}{|x-y|^{n+s}}\, dy.
\]

Next we introduce the fractional Sobolev space $W^{s,p}(\R^n)$ for $s \in (0,1)$. This space is induced by the semi-norm (called Sobolev-Slobodeckij or Gagliardo norm)
\[
[f]_{W^{s,p}(\Omega)} = \brac{\int_{\Omega} \int_{\Omega} \frac{|f(x)-f(y)|^{p}}{|x-y|^{n+sp}}\, dx\, dy}^{\frac{1}{p}},
\]
and $\|\cdot\|_{W^{s,p}(\Omega)} = \| \cdot\|_{L^{p}(\Omega)} +  [\cdot]_{W^{s,p}(\Omega)}$ serves as a norm.

%
%
%

We now show that $T_{K,s_1,s_2}$ is bounded from $L^2$ to $L^2$. 
\begin{proposition}\label{la:L2bound}
Let $K \in L^\infty(\R^n\times \R^n)$. 
Then, for any $f \in L^2(\R^n)$, we have
\[
 \|T_{K,s_1,s_2} f\|_{L^2(\R^n)} \aleq \|K\|_{L^\infty(\R^n\times \R^n)} \|f\|_{L^2(\R^n)}.
\]
\end{proposition}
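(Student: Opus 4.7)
The plan is to show the bound by duality, rewriting the bilinear form $\langle T_{K,s_1,s_2}f, g\rangle$ in a symmetric way that allows one to Cauchy--Schwarz in the measure $K(x,y)\,dx\,dy/|x-y|^{n+2s}$, with the split of the exponent $n+2s = \tfrac{n+2s_1}{2} + \tfrac{n+2s_2}{2}$ that is available precisely because $s_1+s_2 = 2s$.

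First, for $f, g$ in a suitable dense class (say Schwartz), I would write
\[
\langle T_{K,s_1,s_2}f, g\rangle = \int_{\R^n}\int_{\R^n} A_{K,s_1,s_2}(z_1,z_2) f(z_2) g(z_1)\,dz_1 \, dz_2,
\]
plug in the definition of $A_{K,s_1,s_2}$, and apply Fubini to carry out the $z_1$- and $z_2$-integrals inside. Using the integral representation $I^{s_i} h(w) = c\int_{\R^n} |w-z|^{s_i-n} h(z)\,dz$ for $h = f$ or $h = g$, the inner $z_1$-integral collapses to $c(I^{s_1}g(x) - I^{s_1}g(y))$ and similarly the $z_2$-integral to $c(I^{s_2}f(x) - I^{s_2}f(y))$, leading to the bilinear identity
\[
\langle T_{K,s_1,s_2}f, g\rangle = c\int_{\R^n}\int_{\R^n} \frac{K(x,y)\bigl(I^{s_1}g(x) - I^{s_1}g(y)\bigr)\bigl(I^{s_2}f(x) - I^{s_2}f(y)\bigr)}{|x-y|^{n+2s}}\,dx\,dy.
\]

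Next I would factor the kernel as
\[
\frac{1}{|x-y|^{n+2s}} = \frac{1}{|x-y|^{(n+2s_1)/2}}\cdot\frac{1}{|x-y|^{(n+2s_2)/2}},
\]
which is valid exactly because $s_1+s_2=2s$, and then estimate $|K(x,y)| \le \|K\|_{L^\infty}$ and apply Cauchy--Schwarz in $(x,y)$ to obtain
\[
|\langle T_{K,s_1,s_2}f,g\rangle| \le C\|K\|_{L^\infty}\,[I^{s_1}g]_{W^{s_1,2}(\R^n)}\,[I^{s_2}f]_{W^{s_2,2}(\R^n)}.
\]
Finally, by Plancherel the Gagliardo seminorm $[\cdot]_{W^{\sigma,2}(\R^n)}$ is comparable to $\|\laps{\sigma}\cdot\|_{L^2(\R^n)}$, and since $\laps{s_i}\lapms{s_i}$ is the identity on the Fourier side, I obtain $[I^{s_i}h]_{W^{s_i,2}} \aeq \|h\|_{L^2}$. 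Combining these gives $|\langle T_{K,s_1,s_2}f,g\rangle| \le C\|K\|_{L^\infty}\|f\|_{L^2}\|g\|_{L^2}$, and duality yields the claim.

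\textbf{Main obstacle.} The computation itself is almost algebraic once the bilinear form is unfolded, so the real issue is justifying Fubini and the identification of the inner integrals with $I^{s_1}g$ and $I^{s_2}f$, since each of the four terms $|x-z_i|^{s_i-n}$ etc.\ is only locally integrable and the full integrand in the definition of $A_{K,s_1,s_2}$ is only conditionally convergent through the \emph{differences}. I would handle this by first restricting to Schwartz $f,g$ (where $I^{s_i}f$ and $I^{s_i}g$ are well-defined and the decay of the differences controls everything), proving the identity and the $L^2$--$L^2$ estimate there, and then extending by density.
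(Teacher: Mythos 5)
Your proposal is correct and follows essentially the same route as the paper: dualize against a test function, use Fubini to rewrite the pairing as the bilinear form $\int\int K(x,y)\,(I^{s_1}g(x)-I^{s_1}g(y))(I^{s_2}f(x)-I^{s_2}f(y))\,|x-y|^{-n-2s}\,dx\,dy$, split $n+2s=\frac{n+2s_1}{2}+\frac{n+2s_2}{2}$ and apply Cauchy--Schwarz to obtain the product of Gagliardo seminorms, and then identify $[I^{s_i}h]_{W^{s_i,2}}\aeq \|h\|_{L^2}$. The only cosmetic difference is that you justify this last equivalence by Plancherel, while the paper cites Triebel--Lizorkin space identities; at $p=2$ these amount to the same fact.
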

\begin{proof} 
Let $\varphi \in C^\infty_c(\R^n)$. By Fubini's theorem, 
\[
\int_{\R^n} (T_{K,s_1,s_2} f)(z_1) \varphi(z_1) \,dz_1 
= \int_{\R^n} \int_{\R^n} K(x,y)\frac{(\lapms{s_1} \varphi (x)-\lapms{s_1} \varphi(y))\, (\lapms{s_2} f(x)-\lapms{s_2} f(y))}{|x-y|^{n+2s}}\, dx\, dy.
\]

Now, using H\"older's inequality twice, we obtain that (recall that $s_1+s_2 = 2s$ and $s_1,s_2 \in (0,1)$,
\[
\begin{split}
&\int_{\R^n} (T_{K,s_1,s_2} f)(z_1) \varphi(z_1) \,dz_1 \\
&= \int_{\R^n} \int_{\R^n} K(x,y)\frac{(\lapms{s_1} \varphi (x)-\lapms{s_1} \varphi(y))\, (\lapms{s_2} f(x)-\lapms{s_2} f(y))}{|x-y|^{n+2s}}\, dx\, dy\\
&\aleq \int_{\R^n} \left( \int_{\R^n} \frac{|\lapms{s_1} f(x)-\lapms{s_1} f(y)|^2}{|x-y|^{n+2s_1}}\, dx \right)^\frac{1}{2} \, \left( \int_{\R^n} \frac{|\lapms{s_1} \varphi(x)-\lapms{s_1} \varphi(y)|^2}{|x-y|^{n+2s_2}}\, dx \right)^\frac{1}{2} \,dy \\
&\aleq \left( \int_{\R^n} \int_{\R^n} \frac{|\lapms{s_1} f(x)-\lapms{s_1} f(y)|^2}{|x-y|^{n+2s_1}}\,dx\,dy \right)^\frac{1}{2} 
 \, \left( \int_{\R^n} \int_{\R^n} \frac{|\lapms{s_2} \varphi(x)-\lapms{s_2} \varphi(y)|^2}{|x-y|^{n+2s_2}}\, dx\,dy \right)^\frac{1}{2} \\
&\aeq [\lapms{s_1} f]_{W^{s_1,2}(\R^n)}\, [\lapms{s_2} \varphi]_{W^{s_1,2}(\R^n)}. 
\end{split}
\]
We now employ Triebel-Lizorkin space theory: $\dot{W}^{s,p} \aeq \dot{F}^{s}_{pp}$, and $\lapms{s} \dot{F}^{s}_{pp} \aeq \dot{F}^0_{pp}$, see \cite[p.14]{RS96}, moreover $\dot{F}^0_{p2} \aeq L^p$  see \cite[Proposition 2, p. 95]{RS96}. In particular,
\[
 [\lapms{t} f]_{W^{t,2}} \aeq \|f\|_{L^2(\R^n)}.
\]
Hence, we get 
\[
\int_{\R^n} (T_{K,s_1,s_2} f)(z_1) \varphi(z_1) \,dz_1 \aleq \|f\|_{L^2(\R^n)} \|\varphi\|_{L^2(\R^n)}
\]
for any $\varphi \in C^\infty_c(\R^n)$. By duality implies that
\[
\|Tf\|_{L^2(\R^n)} = \sup_{\substack{\varphi \in C^\infty_c(\R^n)\\\|\varphi\|_{L^2(\R^n)} \leq 1}} \int (T_{K,s_1,s_2} f)(z_1) \varphi(z_1) \,dz_1
 \aleq  \|f\|_{L^2(\R^n)}. 
\]
\end{proof}

\section{\texorpdfstring{$A_{K,s_1,s_2}$}{A} is a standard kernel: Proof of Proposition~\ref{pr:se:1}}\label{s:SK}
In this section, we prove that  $A_{K,s_1,s_2}$ is a \emph{standard kernel} which is satisfying the size and regularity conditions, \Cref{pr:se:1}, cf. \cite[Definition 4.1.2]{GMF}.
%
Before proving that, we give some estimates which will be useful later. 
The following lemma is a well-known application of the fundamental theorem of calculus, see, e.g., \cite[Lemma 3.3]{MSY20}.
\begin{lemma}\label{la:fundthm}
	For any $r \in\R$,  $\sigma \in [0,1]$ there exists a constant $C$ depending on $r$ such that the following holds. 
	
	Let $a,b \in \R^n\backslash \{0\}$ with $|a-b|\aleq \min\{|a|,|b|\}$. Then
	\[
	\abs{ |a|^r - |b|^{r} } \leq C\, |a-b|^{\sigma}\, \min\left \{|a|^{r-\sigma},|b|^{r-\sigma}\right \}.
	\]
\end{lemma}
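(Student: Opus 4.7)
The inequality is a natural interpolation between the endpoint cases $\sigma=1$ and $\sigma=0$, and I would first establish these two separately and then combine them by a H\"older-type manipulation.

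For $\sigma=1$, the natural approach is the fundamental theorem of calculus applied to $g(x)=|x|^{r}$, which is smooth on $\R^{n}\setminus\{0\}$ with $\nabla g(x)=r|x|^{r-2}x$. Parametrizing the segment from $b$ to $a$ gives
\[
|a|^{r}-|b|^{r}=\int_{0}^{1} r\,|(1-t)b+ta|^{r-2}\,\bigl((1-t)b+ta\bigr)\cdot(a-b)\,dt.
\]
Reading the hypothesis $|a-b|\aleq\min\{|a|,|b|\}$ with implicit constant small enough (say at most $1/2$; any larger constant is absorbed into $C$), the segment stays away from the origin and in fact $|(1-t)b+ta|\aeq|a|\aeq|b|$ uniformly in $t\in[0,1]$. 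Plugging this in bounds the integrand by a constant multiple of $\min\{|a|,|b|\}^{r-1}|a-b|$, yielding the $\sigma=1$ inequality.

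The case $\sigma=0$ is immediate: the same comparability $|a|\aeq|b|$ yields $|a|^{r}\aeq|b|^{r}$, so
\[
\bigl||a|^{r}-|b|^{r}\bigr|\le |a|^{r}+|b|^{r}\aleq \min\{|a|^{r},|b|^{r}\}.
\]
For general $\sigma\in(0,1)$ I would split
\[
\bigl||a|^{r}-|b|^{r}\bigr|=\bigl||a|^{r}-|b|^{r}\bigr|^{\sigma}\cdot\bigl||a|^{r}-|b|^{r}\bigr|^{1-\sigma},
\]
estimate the first factor by the $\sigma=1$ bound raised to the $\sigma$-th power and the second factor by the $\sigma=0$ bound raised to the $(1-\sigma)$-th power, and add the exponents on $\min\{|a|,|b|\}$ to obtain $(r-1)\sigma+r(1-\sigma)=r-\sigma$. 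One final application of $|a|\aeq|b|$ converts $\min\{|a|,|b|\}^{r-\sigma}$ into $\min\{|a|^{r-\sigma},|b|^{r-\sigma}\}$, giving the stated inequality.

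The only genuine subtlety is the origin-avoidance in the $\sigma=1$ step: smallness of $|a-b|$ relative to $\min\{|a|,|b|\}$ must be used to keep the segment off the singularity of $|x|^{r}$ and to make $|(1-t)b+ta|^{r-1}$ comparable to $\min\{|a|,|b|\}^{r-1}$ regardless of the sign of $r-1$. Everything else is routine arithmetic with exponents, and the constant $C$ ultimately depends on $r$ and on the implicit constant hidden in the hypothesis $|a-b|\aleq\min\{|a|,|b|\}$.
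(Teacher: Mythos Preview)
Your proof is correct and follows essentially the same strategy as the paper: establish the $\sigma=1$ case via the fundamental theorem of calculus (using that $|a|\aeq|b|$ keeps the path away from the origin) and then reduce general $\sigma$ to it. The only minor technical differences are that the paper normalizes to an annulus and takes a curve there (rather than the straight segment, which sidesteps your small-constant caveat), and passes from $\sigma=1$ to general $\sigma$ directly via $|a-b|^{1-\sigma}\aleq |a|^{1-\sigma}$ instead of your interpolation $\bigl||a|^r-|b|^r\bigr|^{\sigma}\cdot\bigl||a|^r-|b|^r\bigr|^{1-\sigma}$.
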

For the convenience of the reader we repeat the proof.
\begin{proof}
	If $r=0$, the inequality follows trivially. Now suppose $r \neq 0$. 
	Observe that 
	if	$|a-b|\aleq \min\{|a|,|b|\}$, 
	then $|a| \aeq |b|$ (with a uniform constant) and so, 
	\[
	\min\left \{|a|^{r-\sigma},|b|^{r-\sigma}\right \} \aeq |a|^{r-\sigma}.
	\]
	Note that for any $\sigma \in [0,1]$ we have
	\[
	|a-b| = |a-b|^{\sigma} |a-b|^{1-\sigma} \aleq |a-b|^{\sigma} |a|^{1-\sigma}.
	\]
	Thus, 
	it suffices to show that 
	\[
	\abs{ |a|^r - |b|^{r} } \aleq |a-b|\, |b|^{r-1}.
	\]
	Divide by $|b|^{r}$, the above inequality is equivalent to showing 
	\[
	\abs{ \abs{\frac{a}{|b|}}^r - \abs{\frac{b}{|b|}}^{r} } \aleq  \abs{\frac{a}{|b|}-\frac{b}{|b|}}.
	\]
	Since $|a| \aeq |b|$, there are uniform constants $0<r_1 <1< r_2<\infty$ such that both $\frac{a}{|b|}$ and ${\frac{b}{|b|}}$ are in $A:= B_{R_2}(0) \backslash B_{R_1}(0)$. Hence, we actually need to show that
	\[
	\abs{ \abs{u}^r - \abs{v}^{r} } \leq C\, \abs{u-v} 
	\]
	for all $u,v \in A$. 
	Since $A$ is an annulus, for any $u,v \in A$ there exists a curve $\gamma \subset A$ with $\gamma(0) = u$, $\gamma(1) = v$, $|\gamma'| \aeq  |u-v|$ -- with constants depending only on $r_1$ and $r_2$. 
	We define $\eta : [0,1] \rightarrow \R$ by 
	\[
	\eta(t) := |\gamma(t)|^r.
	\]
	Then, the fundamental theorem of calculus implies
	\[
	\abs{ \abs{u}^r - \abs{v}^r } \leq \sup_{t \in [0,1]} |\eta'(t)| \aleq |\gamma(t)|^{r-1} |\gamma'(t)| \aleq |u-v|.
	\]
\end{proof}

Similarly in spirit to \Cref{la:fundthm}, the next lemma is also obtained by mean value theorem, albeit with quite a few more technical arguments.
\begin{lemma}\label{la:mvf}
For any $\alpha, \sigma \in [0,1]$ there exists a constant $C >0$ such that the following holds.

Let $a,b,h \in \R^n \backslash \{0\}$ such that $a+h, b+h \neq 0$. 
\begin{itemize}
	\item[(1)] If $|h| < \frac{1}{2} \min \{|a|,|b|\}$ or $|h| < \frac{1}{2} \min \{|a+h|,|b+h|\}$, 
	we have 
	\[
	\begin{split}
	&\left | |a+h|^{s-n}- |b+h|^{s-n} - \brac{|a|^{s-n}-|b|^{s-n}} \right | \\ 
	&\leq C |h|^\alpha \brac{\left | |a+h|^{s-\alpha-n}- |b+h|^{s-\alpha-n}\right | + \abs{|a|^{s-\alpha-n}-|b|^{s-\alpha-n}}}\\
	& +C|h|^\alpha \min\{|a|^{s-\alpha-\sigma-n},|b|^{s-\alpha-\sigma-n}\}\, |a-b|^\sigma
	\end{split}
	\]
	\item[(2)] If $|h| > \frac{1}{2} \min \{|a|,|b|\}$ and $|h| > \frac{1}{2} \min \{|a+h|,|b+h|\}$, then
	\[
	\begin{split}
	&\left | |a+h|^{s-n}- |b+h|^{s-n} \right | + \left |\brac{|a|^{s-n}-|b|^{s-n}} \right |\\ 
	&\leq C
	|h|^\alpha \abs{|a+h|^{s-\alpha-n}-|b+h|^{s-\alpha-n}}
	+ C|h|^\alpha \abs{|a|^{s-\alpha-n}-|b|^{s-\alpha-n}}.
	\end{split}
	\]
	\end{itemize}
\end{lemma}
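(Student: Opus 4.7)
The two parts correspond to opposite regimes of $|h|$ relative to the magnitudes of $a,b,a+h,b+h$, and I treat them separately.

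For part (2), the hypothesis forces $\min\{|a|,|b|\} \le 2|h|$; without loss of generality assume $|a| \le |b|$. The key step is the algebraic inequality
\[
\abs{|a|^{s-n} - |b|^{s-n}} \aleq |a|^\alpha\, \abs{|a|^{s-\alpha-n} - |b|^{s-\alpha-n}},
\]
which I prove by writing both sides as $|a|^{s-n}(1-t^{s-n})$ and $|a|^{s-\alpha-n}(1-t^{s-\alpha-n})$ respectively, with $t=|b|/|a|\ge1$, and checking that the ratio $(1-t^{s-n})/(1-t^{s-\alpha-n})$ is uniformly bounded for $t\ge1$ (via L'Hopital at $t=1$ and the fact that both factors tend to $1$ as $t\to\infty$). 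Combining with $|a|^\alpha \le (2|h|)^\alpha$ produces the required $|h|^\alpha$ factor, and the same argument applied to the pair $(a+h,b+h)$ completes part (2).

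For part (1), without loss of generality assume $|h|<\tfrac12\min\{|a|,|b|\}$; the sub-case $|h|<\tfrac12\min\{|a+h|,|b+h|\}$ follows by the symmetric substitution $(a,b,h)\mapsto(a+h,b+h,-h)$, under which the claim is invariant. Set $\Phi(\tau):=|a+\tau|^{s-n}-|b+\tau|^{s-n}$, so the left-hand side equals $|\Phi(h)-\Phi(0)|$. I interpolate two bounds: the trivial triangle bound $|\Phi(h)|+|\Phi(0)|$ with no $|h|$ factor; and the fundamental-theorem-of-calculus bound
\[
|\Phi(h)-\Phi(0)| \aleq |h| \int_0^1 \abs{\frac{a+th}{|a+th|^{n-s+2}} - \frac{b+th}{|b+th|^{n-s+2}}}\, dt,
\]
whose integrand I estimate using \Cref{la:fundthm} with exponent $\sigma$ applied to the scalar factors, producing $|a-b|^\sigma\min\{|a|,|b|\}^{s-n-1-\sigma}$ (the replacement $|a+th|\aeq|a|$ uses that $|h|$ is small). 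Taking a weighted geometric mean of the two bounds converts $|h|$ into $|h|^\alpha$: the first two terms of the right-hand side arise from processing the trivial part by \Cref{la:fundthm} in the $h$-direction (converting $|a|^{s-n}$ into a constant times $|h|^\alpha|a|^{s-\alpha-n}$, and similarly for the other points), while the third term comes from the mean-value part.

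I expect the main obstacle to be the straight-line step inside the integrand of the second bound: when $a$ and $b$ point in nearly opposite directions the segment between them may pass close to the origin, so the comparability $|a+th|\aeq|a|$ needed for the uniform gradient estimate fails along that segment. I plan to handle this by splitting into sub-cases according to whether $|a-b|\le c\min\{|a|,|b|\}$; in the small-gap regime the straight-line argument applies directly, and in the large-gap regime one abandons the MVT and controls the left-hand side using only the triangle bound, where \Cref{la:fundthm} applied separately to $|a+h|^{s-n}-|a|^{s-n}$ and $|b+h|^{s-n}-|b|^{s-n}$ directly produces the first two right-hand-side terms with the $|h|^\alpha$ factor.
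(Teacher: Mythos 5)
Your proposal is correct in substance but follows a genuinely different route from the paper. For part (2) the paper distinguishes whether $|a+h|$ and $|b+h|$ are comparable, handling the non-comparable case by noting the difference is itself comparable to $\min\{|a+h|,|b+h|\}^{s-n}$, and the comparable case through a mean-value argument for the auxiliary power $g(t)=t^{(s-n)/(s-n-\alpha)}$; your single inequality $\abs{|a|^{s-n}-|b|^{s-n}}\le \min\{|a|,|b|\}^{\alpha}\abs{|a|^{s-\alpha-n}-|b|^{s-\alpha-n}}$ replaces both cases at once, and in fact holds with constant $1$, since with $t=|b|/|a|\ge 1$ it reduces to $1-t^{-(n-s)}\le 1-t^{-(n-s+\alpha)}$, so no L'Hopital argument is even needed; this is cleaner than the paper. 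For part (1) the paper likewise differentiates along the segment from $0$ to $h$, but it keeps $\abs{|a+\tilde h|^{s-1-n}-|b+\tilde h|^{s-1-n}}$ as a difference and converts $|h|$ times it into $|h|^{\alpha}\abs{|a+\tilde h|^{s-\alpha-n}-|b+\tilde h|^{s-\alpha-n}}$ via $g(t)=t^{(s-n-1)/(s-n-\alpha)}$ -- that is how the first group of right-hand-side terms arises there. You instead split on whether $|a-b|\aleq\min\{|a|,|b|\}$, bound the whole gradient difference by $|a-b|^{\sigma}\min\{|a|,|b|\}^{s-n-1-\sigma}$ in the small-gap regime (legitimate, since \Cref{la:fundthm} applies there), and use the triangle inequality plus \Cref{la:fundthm} in the $h$-direction in the large-gap regime; this avoids the paper's intermediate point $\tilde h$ and effectively shows the third right-hand-side term alone suffices in part (1).

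One bookkeeping error should be fixed: in the large-gap regime the quantities you produce are $|h|^{\alpha}|a|^{s-\alpha-n}$ and $|h|^{\alpha}|b|^{s-\alpha-n}$, and these are \emph{not} dominated by the first two right-hand-side terms, which are differences and can vanish (take $|a|=|b|$ with $b$ near $-a$ and $h\perp a$). They are, however, absorbed by the third term: since $s-\alpha-n<0$ one has $|a|^{s-\alpha-n}+|b|^{s-\alpha-n}\le 2\min\{|a|,|b|\}^{s-\alpha-n}$, and the large-gap condition $|a-b|\ageq\min\{|a|,|b|\}$ gives $\min\{|a|,|b|\}^{s-\alpha-n}\aleq \min\{|a|,|b|\}^{s-\alpha-\sigma-n}|a-b|^{\sigma}$. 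With this correction, and the remark that in the sub-case $|h|<\frac12\min\{|a+h|,|b+h|\}$ one has $\min\{|a|,|b|\}\aeq\min\{|a+h|,|b+h|\}$ so that your substitution $(a,b,h)\mapsto(a+h,b+h,-h)$ changes the right-hand side only by a uniform constant, the plan goes through.
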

\begin{proof}
\underline{(1) Assume $|h| < \frac{1}{2} \min \{|a|,|b|\}$ or $|h| < \frac{1}{2} \min \{|a+h|,|b+h|\}$}. Observe that in this case $|a| \aeq |a+\tilde{h}|$ and $|b| \aeq |b+\tilde{h}|$ for any $|\tilde{h}| \leq |h|$. In the following we thus may assume w.l.o.g. $|h| < \frac{1}{2} \min \{|a|,|b|\}$, the other case follows exactly the same way.

	Let $f: \R^n \rightarrow \R$ be defined by
	\[
	f(h) := |a+h|^{s-n}- |b+h|^{s-n}.
	\]
	By the mean value theorem, we have that for some $|\tilde{h}| < |h|$,
	\[
	\left |f(h) - f(0) \right| \aleq |h| |Df(\tilde{h})|.
	\]
	Now,
	\[
	\begin{split}
	Df(\tilde{h}) =& |a+\tilde{h}|^{s-1-n} \frac{a+\tilde{h}}{|a+\tilde{h}|} -|b+\tilde{h}|^{s-1-n} \frac{b+\tilde{h}}{|b+\tilde{h}|}\\
	=& \brac{|a+\tilde{h}|^{s-1-n} - |b+\tilde{h}|^{s-1-n}} \frac{a+\tilde{h}}{|a+\tilde{h}|} +|b+\tilde{h}|^{s-1-n} \brac{\frac{a+\tilde{h}}{|a+\tilde{h}|}-\frac{b+\tilde{h}}{|b+\tilde{h}|}}.\\
	\end{split}
	\]
	We first treat the second term in this estimate. Note that for any $c, d \in \R^n\backslash \{0\}$, we have
	\[
	\begin{split}
	\left |\frac{c}{|c|} - \frac{d}{|d|} \right | =& \frac{1}{|c|\, |d|} \big | c|d| - d|c| \big |\\
	\leq& \frac{1}{|c|\, |d|} \brac{| c-d||d| +|d| \left ||d| - |c| \right |}\\
	\leq&\frac{2}{|c|} |c-d|.
	\end{split}
	\]
	We can interchange the role of $c$ and $d$, and so 
	\[
	\left|\frac{c}{|c|} - \frac{d}{|d|}\right| \aleq \min\{|c|^{-1},|d|^{-1}\} |c-d|
	\]
	This implies that for any $\sigma \in [0,1]$,
	\[
	\left|\frac{c}{|c|} - \frac{d}{|d|}\right| = \left|\frac{c}{|c|} - \frac{d}{|d|}\right|^\sigma\, \left|\frac{c}{|c|} - \frac{d}{|d|}\right|^{1-\sigma} \aleq 2^{1-\sigma}\, \min\{|c|^{-\sigma},|d|^{-\sigma}\} |c-d|^\sigma.
	\]
	That is, we have 
	\[
	|h| |Df(\tilde{h)}| \aleq |h|\left ||a+\tilde{h}|^{s-1-n} - |b+\tilde{h}|^{s-1-n}\right |+ |h| |b|^{s-1-\sigma-n}\, |a-b|^\sigma
	\]
	Since $|h| \aleq |b|$ this implies for any $\alpha \in [0,1]$,
	\[
	|h| |Df(\tilde{h)}| \aleq |h|\left ||a+\tilde{h}|^{s-1-n} - |b+\tilde{h}|^{s-1-n}\right |+ |h|^\alpha |b|^{s-\alpha-\sigma-n}\, |a-b|^\sigma.
	\]
	Interchanging the role of $|a|$ and $|b|$ we get 
	\[
	|h| |Df(\tilde{h)}| \aleq |h|\left ||a+\tilde{h}|^{s-1-n} - |b+\tilde{h}|^{s-1-n}\right |+ |h|^\alpha \min\{|a|^{s-\alpha-\sigma-n},|b|^{s-\alpha-\sigma-n}\}\, |a-b|^\sigma.
	\]
	It remains to estimate the first term. For this, we define $g: (0,\infty) \rightarrow \R$ by
	\[
	g(t) := t^{\frac{s-n-1}{s-n-\alpha}} = t^{\frac{n+1-s}{n+\alpha-s}}.
	\]
	Since $\alpha \in [0,1]$, observe that $\frac{n+1-s}{n+\alpha-s} \geq 1$.
	Let $t_1, t_2 \in (0,\infty)$. Then, by mean value theorem, there exists $c \in (t_1, t_2)$ such that
	\[
	|g(t_1)-g(t_2)| \leq g'(c) |t_1-t_2|.
	\]
	This implies that 
	\[
	|g(t_1)-g(t_2)| \aleq \max\{(t_1)^{\frac{n+1-s}{n+\alpha-s} -1},(t_2)^{\frac{n+1-s}{n+\alpha-s} -1}\}\, |t_1-t_2|.
	\]
	We apply the above inequality with $t_1=|a+\tilde{h}|^{s-n-\alpha} \aeq |a|^{s-n-\alpha}$ and $t_2=|b+\tilde{h}|^{s-n-\alpha} \aeq |b|^{s-n-\alpha}$. Then we get
	\[
	\begin{split}
	&\left ||a+\tilde{h}|^{s-1-n} - |b+\tilde{h}|^{s-1-n}\right |\\
	&=\left |g\brac{|a+\tilde{h}|^{s-n-\alpha}} - g\brac{|b+\tilde{h}|^{s-n-\alpha}}\right |\\
	&\aleq \max\left \{ |a|^{\brac{s-n-\alpha}\brac{\frac{n+1-s}{n+\alpha-s} -1}},|b|^{\brac{s-n-\alpha}\brac{\frac{n+1-s}{n+\alpha-s} -1}}\right \}\, \left ||a+\tilde{h}|^{s-\alpha-n} - |b+\tilde{h}|^{s-\alpha-n}\right |\\
	&\aeq\max\left \{ |a|^{\alpha-1},|b|^{\alpha-1}\right \}\, \left ||a+\tilde{h}|^{s-\alpha-n} - |b+\tilde{h}|^{s-\alpha-n}\right |.\\
	\end{split}
	\]
	Since $\alpha \leq 1$, $\max\{ |a|^{\alpha-1},|b|^{\alpha-1}\} = \min\{|a|,|b|\}^{\alpha-1}$. That is, again since $|h| < \frac{1}{2} \min \{|a|,|b|\}$ we have 
	\[
	|h|\, \left ||a+\tilde{h}|^{s-1-n} - |b+\tilde{h}|^{s-1-n}\right | \aleq |h|^\alpha \left ||a+\tilde{h}|^{s-\alpha-n} - |b+\tilde{h}|^{s-\alpha-n}\right |.
	\]
	This concludes the first claim.
%
%
	
	\underline{(2) Assume that $|h| > \frac{1}{2} \min \{|a|,|b|\}$ \emph{and} $|h| > \frac{1}{2} \min \{|a+h|,|b+h|\}$}. 
	
	We only show the estimate of $\left | |a+h|^{s-n}- |b+h|^{s-n} \right |$, the estimate for $\left | |a|^{s-n}- |b|^{s-n} \right |$ is almost verbatim. 
	
	We have two cases.
	 
	\underline{Case 1: $\min \{|a+h|,|b+h|\} \leq \frac{1}{2} \max \{|a+h|,|b+h|\}$}.
	Then for any $\theta \in (0,n)$, with a constant only depending on $\theta-n$,
	\[
	\left | |a+h|^{\theta-n}- |b+h|^{\theta-n} \right | \aeq \min \{|a+h|,|b+h|\}^{\theta-n}
	\]
	Thus,
	\[
	\begin{split}
	\left | |a+h|^{s-n}- |b+h|^{s-n} \right | 
	&\aeq \min\{|a+h|, |b+h|\}^{s-n}\\
	&= \min\{|a+h|, |b+h|\}^{s-n-\alpha+\alpha}\\
	&\aleq |h|^\alpha \, \min\{|a+h|, |b+h|\}^{s-n-\alpha}\\
	&\aeq |h|^\alpha \, \left ||a+h|^{s-\alpha-n} - |b+h|^{s-\alpha-n}\right |.
	\end{split}
	\]
	\underline{Case 2: $\min \{|a+h|,|b+h|\} \aeq \max \{|a+h|,|b+h|\}$.}
	We define $g : [0,\infty) \rightarrow [0,\infty)$ by
	\[
	g(t) := t^{\frac{s-n}{s-n-\alpha}}
	\]
	for any $\alpha \in [0,1]$. Then, for any $t_1,t_2 \in (0,\infty)$,
	by mean value theorem, there exists $d \in (t_1,t_2)$ such that
	\[
	|g(t_1)-g(t_2)| \leq g'(d)\, |t_1-t_2|.
	\]
	Thus,
	\[
	|g(t_1)-g(t_2)| \leq \max\{t_1^{\frac{s-n}{s-n-\alpha} -1},t_2^{\frac{s-n}{s-n-\alpha} -1}\}\, |t_1-t_2|.
	\]
	For $t_1 = |a+h|^{s-n-\alpha}$ and $t_2 = |b+h|^{s-n-\alpha}$ we then find
	\[
	\begin{split}
	&\left | |a+h|^{s-n}- |b+h|^{s-n} \right | \\
	&= \left |g\brac{|a+h|^{s-n-\alpha}} - g\brac{|b+h|^{s-n-\alpha}}\right |\\
	&\aleq \max\{|a+h|^{(s-n-\alpha)\brac{\frac{s-n}{s-n-\alpha} -1}}, |b+h|^{(s-n-\alpha)\brac{\frac{s-n}{s-n-\alpha} -1}} \}\, \left ||a+h|^{s-\alpha-n} - |b+h|^{s-\alpha-n}\right |\\
	&\aeq \max\{ |a+h|^\alpha,|b+h|^\alpha\}\, \left ||a+h|^{s-\alpha-n} - |b+h|^{s-\alpha-n}\right |\\
	&\aeq \min\{ |a+h|^\alpha,|b+h|^\alpha\}\, \left ||a+h|^{s-\alpha-n} - |b+h|^{s-\alpha-n}\right |\\
	&\aleq |h|^\alpha \, \left ||a+h|^{s-\alpha-n} - |b+h|^{s-\alpha-n}\right |.
	\end{split}
	\]
	This concludes the proof of the second claim.
\end{proof}

With the help of \Cref{la:mvf} we will be able to reduce the proof of \Cref{pr:se:1} to the estimate of \Cref{pr:se:1v2} below.
Set 
for $l = 1, 2$, $\alpha, \sigma \in [0,1]$, $s,s_1,s_2 \in (0,1)$ with $s_1+s_2 = 2s$,
\[
M^{\alpha, \sigma}_l(z_1, z_2) := \int_{\R^n} \int_{\R^n} K(x,y) \, \kappa^{\alpha, \sigma}_l(x,y,z_1,z_2) \, dx\, dy
\]
where
\[
\kappa^{\alpha, \sigma}_1(x,y,z_1,z_2) := \frac{\left | |x-z_1|^{s_1-\alpha-n}-|y-z_1|^{s_1-\alpha-n} \right |\, \left | |x-z_2|^{s_2-n}-|y-z_2|^{s_2-n}\right|}{|x-y|^{n+2s}}
\]
and
\[
\kappa^{\alpha,\sigma}_2(x,y,z_1,z_2) := \frac{
	\min\{|x-z_1|^{s_1-\alpha-\sigma-n},|y-z_1|^{s_1-\alpha-\sigma-n}\} 
	\left | |x-z_2|^{s_2-n}-|y-z_2|^{s_2-n}\right|}{|x-y|^{n+2s-\sigma}}.
\]

\begin{proposition}\label{pr:se:1v2}
	Let $\theta \in (0,\frac{1}{10})$ be such that $10 \theta < s,s_1,s_2 < 1-10\theta$. 
	Then, for all $\alpha \in [0,\frac{1}{10}\theta)$, $\sigma \in (s_1+\theta,2s)$  and for $l=1,2$ we have 
	\[
	|M^{\alpha, \sigma}_l(z_1, z_2)| \leq  C(\theta)\, \frac{\|K\|_{L^\infty(\R^n \times \R^n)}}{|z_1-z_2|^{\alpha+n}} \quad \mbox{ for all } z_1\neq z_2.
	\]
\end{proposition}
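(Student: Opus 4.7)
The kernels $\kappa_l^{\alpha,\sigma}$ enjoy a natural scaling property: setting $R:=|z_1-z_2|$, $w:=(z_2-z_1)/R$, and substituting $x=z_1+Ru$, $y=z_1+Rv$, a direct computation using $s_1+s_2=2s$ shows
\[
M_l^{\alpha,\sigma}(z_1,z_2) = R^{-n-\alpha}\int_{\R^n}\int_{\R^n} K(z_1+Ru,z_1+Rv)\,\tilde\kappa_l(u,v)\,du\,dv,
\]
where $\tilde\kappa_l$ is $\kappa_l^{\alpha,\sigma}$ with $(z_1,z_2)=(0,w)$. Bounding $|K|\le\|K\|_\infty$ and using rotational invariance to take $w=e_1$, the claim reduces to the dimensionless estimate $\int\int|\tilde\kappa_l(u,v)|\,du\,dv\aleq C(\theta)$.

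\textbf{Unified difference bound.}
From \Cref{la:fundthm} in the regime $|u-v|<\tfrac12\min(|u-a|,|v-a|)$, together with a triangle-inequality argument in the complementary regime (in which $\min(|u-a|,|v-a|)\le 2|u-v|$ lets us absorb a factor $|u-v|^\tau$), one obtains for any $\tau\in[0,1]$ and any $\beta<n$ the uniform bound
\[
\bigl||u-a|^{\beta-n}-|v-a|^{\beta-n}\bigr|\aleq |u-v|^\tau\,\min(|u-a|,|v-a|)^{\beta-n-\tau}.
\]
Applying this to the two differences in $\kappa_1$ with exponents $\tau_1$ (at $a=0$, $\beta=s_1-\alpha$) and $\tau_2$ (at $a=e_1$, $\beta=s_2$), and to the single difference in $\kappa_2$ with exponent $\tau_2$ (noting that its built-in minimum equals $\max(|u|,|v|)^{s_1-\alpha-\sigma-n}$ since that exponent is negative), the integrand is dominated by an explicit product of negative powers of $|u|,|v|,|u-e_1|,|v-e_1|,|u-v|$.

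\textbf{Region decomposition and integration.}
We partition $\R^n\times\R^n$ into finitely many regions according to (i) whether $u$ and $v$ lie near $0$, near $e_1$, in a bounded intermediate annulus, or at infinity, and (ii) whether $|u-v|$ is small or comparable to the ambient distances. In each region we pass to polar coordinates about the appropriate singularity and integrate the dominating power. The convergence conditions take the form of linear inequalities in $(\tau_1,\tau_2)\in[0,1]^2$: for instance $\tau_1+\tau_2>2s$ is required near the diagonal $u=v$, while $\tau_1+\tau_2<2s$ is required in the far field, and one-sided conditions such as $\tau_2>s_2+\alpha$ appear near the singularity at $u,v\to 0$ for $\kappa_1$ (with analogous conditions for $\kappa_2$ that additionally involve the gap $\sigma-s_1$). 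The hypotheses $s_1,s_2,s\in(10\theta,1-10\theta)$, $\alpha<\theta/10$, and $s_1+\theta<\sigma<2s$ are tailored so that in each region an admissible pair $(\tau_1,\tau_2)$ exists in $[0,1]^2$, with all constants depending only on $\theta$.

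\textbf{Main obstacle.}
The principal difficulty is that the local constraints on $(\tau_1,\tau_2)$ coming from the near-diagonal, near-singularity, and far-field regimes are pairwise incompatible, so no single exponent pair works globally. One must therefore choose $(\tau_1,\tau_2)$ separately in each region and verify that the parameter hypotheses force the required inequalities to be solvable inside $[0,1]$. The proof is thus not a single clever estimate but a disciplined region-by-region bookkeeping; checking compatibility of the local choices across the finite partition is the main non-trivial step.
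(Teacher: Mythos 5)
Your proposal is correct in outline, and the two ingredients you add on top of the paper's toolkit both check out: the substitution $x=z_1+Ru$, $y=z_1+Rv$ with $R=|z_1-z_2|$ does produce exactly the factor $R^{-n-\alpha}$ for both $\kappa^{\alpha,\sigma}_1$ and $\kappa^{\alpha,\sigma}_2$ (using $s_1+s_2=2s$; for $\kappa_2$ the two occurrences of $\sigma$ cancel), and your unified bound $\bigl||u-a|^{\beta-n}-|v-a|^{\beta-n}\bigr|\aleq |u-v|^{\tau}\min(|u-a|,|v-a|)^{\beta-n-\tau}$ is valid for $\beta<n$, $\tau\in[0,1]$: it is \Cref{la:fundthm} in the comparable regime, and otherwise the trivial bound (the difference is at most the larger term, which is $\min(|u-a|,|v-a|)^{\beta-n}$, after which one trades $\min^{\tau}\le (2|u-v|)^{\tau}$). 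This is a genuinely different, and cleaner, organization than the paper's: the paper never rescales, but carries $\delta=|z_1-z_2|$ through each of the cases $\mathcal{A}_i\cap\mathcal{B}_j\cap\mathcal{I}_k$ and extracts $\delta^{-\alpha-n}$ case by case via H\"older's inequality with exponents close to $1$, whereas your normalization reduces everything to finiteness of fixed dimensionless integrals, with your free exponents $(\tau_1,\tau_2)$ playing exactly the role of the paper's region-dependent parameters $\gamma$ and $t$. The trade-off is that the substance is the same region-by-region verification in both versions, and your write-up only displays representative constraints ($\tau_1+\tau_2>2s$ near the diagonal, $\tau_1+\tau_2<2s$ in the far field, $\tau_2>s_2+\alpha$ near the $z_1$-singularity, with the analogue $\tau_1>s_1$ near the $z_2$-singularity) while asserting solvability elsewhere. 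Those constraints are correct, and the hypotheses $10\theta<s,s_1,s_2<1-10\theta$, $\alpha<\theta/10$, $s_1+\theta<\sigma<2s$ do make each local system solvable in $[0,1]^2$ (for instance, in the region where $u$ is near the origin and $v$ is not, one needs $\tau_2\in(s_2+\alpha,\min\{1,2s\})$ and $\tau_1<s_1-\alpha$, both nonempty), so no step would fail; but to stand as a complete proof the finite list of regions together with an admissible pair $(\tau_1,\tau_2)$ for each must actually be written out and the resulting one-variable integrals computed with constants depending only on $\theta$ --- that enumeration is precisely what occupies the whole of the paper's proof.
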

\begin{proof}
	\newcommand{\CA}{\mathcal{A}}
	\newcommand{\CB}{\mathcal{B}}
	\newcommand{\CI}{\mathcal{I}}
	\newcommand{\CJ}{\mathcal{J}}
	Fix $z_1, z_2 \in \R^n$ and let $\delta := |z_1-z_2|>0$. 
	
	We split $\R^n \times \R^n$ into different cases
	\begin{equation}\label{eq:split:12}
	 \R^n \times \R^n = \bigcup_{i=1}^3 \CA_i = \bigcup_{i=1}^3 \CB_i = \bigcup_{i=1}^3 \CI_i,
	\end{equation}
where
	\[
	\begin{split}
	\CA_1 \equiv \CA_1(z_1) &= \{ (x, y) \in \R^n \times \R^n : |x-y| \leq 10\min\{|x-z_1|, |y-z_1|\} \},\\
	\CA_2\equiv\CA_2(z_1) &= \{ (x, y) \in \R^n \times \R^n : |x-z_1| \leq 10\min\{|y-z_1|, |x-y|\} \},\\
	\CA_3\equiv \CA_3(z_1) &= \{ (x, y) \in \R^n \times \R^n : |y-z_1| \leq 10\min\{|x-z_1|, |x-y|\} \},
	\end{split}
	\]
	and $\CB_i$ are the analogous cases involving $z_2$, namely
	\[
	\begin{split}
	\CB_1\equiv\CB_1(z_2) &= \{ (x, y) \in \R^n \times \R^n : |x-y| \leq 10\min\{|x-z_2|, |y-z_2|\} \},\\
	\CB_2\equiv\CB_2(z_2) &= \{ (x, y) \in \R^n \times \R^n : |x-z_2| \leq 10\min\{|y-z_2|, |x-y|\} \},\\
	\CB_3\equiv\CB_3(z_2) &= \{ (x, y) \in \R^n \times \R^n : |y-z_2| \leq 10\min\{|x-z_2|, |x-y|\} \},
	\end{split}
	\]
	and lastly $\CI_i$,	
	\[
	\begin{split}
	\CI_1 \equiv\CI_1(z_1,z_2) &= \{ (x, y) \in \R^n \times \R^n : y \in \R^n,|x-z_1| \leq 10 \delta \mbox{ and } |x-z_2| \geq \frac{1}{10}\delta \},\\
	\CI_2\equiv\CI_2(z_1,z_2) &= \{ (x, y) \in \R^n \times \R^n : y \in \R^n, |x-z_2| \leq 10\delta \mbox{ and } |x-z_1| \geq \frac{1}{10}\delta \},\\
	\CI_3\equiv\CI_3(z_1,z_2) &= \left \{ (x, y) \in \R^n \times \R^n : y \in \R^n, \frac{1}{100} |x-z_2| \leq |x-z_1| \leq 100|x-z_2| \mbox{ and } |x-z_1| \geq \frac{1}{100}\delta \right \}.
	\end{split}
	\]
It is an elementary exercise to establish \eqref{eq:split:12} (recall that $|z_1-z_2| = \delta$). Observe that there is no need for the sets to be disjoint.	
	
	We write $M^{\alpha, \sigma}_l(z_1, z_2)$ as
	
	\[
	\begin{split}
	M^{\alpha, \sigma}_l(z_1, z_2) 
	&\le \sum_{i,j,k=1}^3 \iint_{\CA_i \cap \CB_j \cap \CI_k} K(x,y) \, \kappa^{\alpha, \sigma}_l(x,y,z_1,z_2) \, dx\, dy\\
	&=: \sum_{i,j,k=1}^3 J^{\alpha,\sigma,l}_{i,j,k}(z_1,z_2).
	\end{split}
	\]
%
%
%
%
%
%
%
%
%
	Our strategy is now to consider all combination of the cases above seperately. That is, we prove below that $$J^{\alpha,\sigma,l}_{i,j,k}(z_1, z_2) \aleq \delta^{-\alpha-n}$$
	for all $i,j,k =1,2,3$ and all $l=1,2$.
	
	\underline{Estimating $J^{\alpha, \sigma, l}_{1,1,1}$, $J^{\alpha, \sigma, l}_{1,1,2}$ and $J^{\alpha, \sigma, l}_{1,1,3}$} :
	we begin noting that for $(x, y) \in \CA_1$ and $(x, y) \in \CB_1$ we have $|x-z_1| \aeq |y-z_1|$ and $|x-z_2| \aeq |y-z_2|$. Moreover,  \Cref{la:fundthm} leads to
	\[
	\left| |x-z_1|^{s_1-\alpha-n} - |y-z_1|^{s_1-\alpha-n} \right| \aleq |x-z_1|^{s_1-\alpha-n-1}\,|x-y|
	\]
	and
	\[
	\left| |x-z_2|^{s_2-n} - |y-z_2|^{s_2-n} \right| \aleq |x-z_2|^{s_2-n-1}\,|x-y|.
	\]
	
	Thus, for $(x, y) \in \CA_1 \cap \CB_1$, we have
	\[ 
	\kappa^{\alpha, \sigma}_1(x,y,z_1,z_2)  \aleq 
	 \frac{|x-z_1|^{s_1-\alpha-n-1}\, |x-z_2|^{s_2-n-1}}{|x-y|^{n+2s-2}}.
	\]
	We can integrate in $y$, observe that $(x,y) \in \CA_1 \cap \CB_1$ implies that $|x-y| \aleq \min\{|x-z_1|,|x-z_2|\}$, and thus (since $s < 1$)
	\[
	 \int_{y \in \CA_1 \cap \CB_1} \frac{1}{|x-y|^{n+2s-2}} dy\aleq \min\{|x-z_1|,|x-z_2|\}^{2-2s}.
	\]
That is, after integrating in $y$, we get 
	
	\[
	\begin{split}
	J^{\alpha, \sigma, 1}_{1,1,1}(z_1, z_2)
	&\aleq \int_{\CA_1 \cap \CB_1 \cap \CI_1} |x-z_1|^{s_1-\alpha-n-1}\, |x-z_2|^{s_2-1-n}\,|x-z_1|^{2-2s}\, dx\\
	&= \int_{\CA_1 \cap \CB_1 \cap \CI_1} |x-z_1|^{1-\alpha-s_2-n}\, |x-z_2|^{s_2-1-n}\, dx.
	\end{split}
	\]
	Now we use H\"older's inequality: let $p > 1$ be so small such that $(1-\alpha-s_2-n)p > -n$ and $(s_2-1-n)p'<-n$, then
	\[
	J^{\alpha, \sigma, 1}_{1,1,1}(z_1, z_2)
	\aleq \brac{\int_{|x-z_1| \aleq \delta} |x-z_1|^{(1-\alpha-s_2-n)p}\, dx}^{\frac{1}{p}}\, \brac{\int_{|x-z_2|\ageq \delta} |x-z_2|^{(s_2-1-n)p'}\, dx}^{\frac{1}{p'}}.
	\]
	Using that $(1-\alpha-s_2-n)p > -n$ and $(s_2-1-n)p'<-n$, we compute
	\[
	J^{\alpha, \sigma, 1}_{1,1,1}(z_1, z_2)
	\aleq \brac{\delta^{(1-\alpha-s_2-n)p+n}}^{\frac{1}{p}}\, \brac{\delta^{(s_2-1-n)p'+n}}^{\frac{1}{p'}} = \delta^{-\alpha-n}
	\]
	which settles this case. 
	
	We argue similarly for $J^{\alpha, \sigma, 1}_{1,1,2}(z_1, z_2)$ (essentially only interchanging the role of $z_1$ and $z_2$ in the argument above).
	
	To estimate $J^{\alpha, \sigma, 1}_{1,1,3}(z_1, z_2)$, we also argue similarly, but since $|x-z_1| \aeq |x-z_2|$ when $(x,y) \in \CI_3$, we arrive at
	\[
	\begin{split}
	J^{\alpha, \sigma, 1}_{1,1,3}(z_1, z_2)
	&\aleq \int_{\CA_1 \cap \CB_1 \cap \CI_3} |x-z_1|^{1-\alpha-s_2-n}\, |x-z_1|^{s_2-1-n}\, dx \\
	&= \int_{|x-z_1| \ageq \delta} |x-z_1|^{-\alpha-2n}  \,dx \\
	&\aeq \delta^{-\alpha-n}.
	\end{split}
	\]

	Next, let us estimate for $J^{\alpha, \sigma, 2}_{i,j,k}$. By \Cref{la:fundthm} applied to the $z_2$-term, we obtain that
	\[
	\kappa^{\alpha, \sigma}_2(x,y,z_1,z_2)  \aleq \frac{|x-z_1|^{s_1-\sigma-\alpha-n}\, |x-z_2|^{s_2-n-1}}{|x-y|^{n+2s-\sigma-1}}.
	\]
	By assumption, we have $\sigma+1 > 2s$ and so
	we can integrate w.r.t. $y$ to get
	\[
	\begin{split}
	J^{\alpha, \sigma, 2}_{1,1,1}(z_1, z_2)
	&\aleq \int_{\CA_1 \cap \CB_1 \cap \CI_1} |x-z_1|^{s_1-\sigma-\alpha-n}\, |x-z_2|^{s_2-n-1}\,|x-z_1|^{-2s+\sigma+1}\, dx\\
	&= \int_{\CA_1 \cap \CB_1 \cap \CI_1} |x-z_1|^{-s_2-\alpha+1-n}\, |x-z_2|^{s_2-1-n}\, dx.
	\end{split}
	\]
	Observe that $- s_2 -\alpha +1> 0$ and $s_2 - 1 < 0$ by assumption. 
	Let $p > 1$ be so small such that $(-s_2-\alpha+1-n)p > -n$ and $(s_2-1-n)p'<-n$. Then, by H\"older's inequality, we have
	\[
	\begin{split}
	J^{\alpha, \sigma, 2}_{1,1,1}(z_1, z_2)
	&\aleq \brac{\int_{|x-z_1| \aleq \delta} |x-z_1|^{(-s_2-\alpha+1-n)p}\, dx}^{\frac{1}{p}}\, \brac{\int_{|x-z_2|\ageq \delta} |x-z_2|^{(s_2-1-n)p'}\, dx}^{\frac{1}{p'}}\\
	&\aleq \brac{\delta^{(-s_2-\alpha+1-n)p+n}}^{\frac{1}{p}}\, \brac{\delta^{(s_2-1-n)p'+n}}^{\frac{1}{p'}}\\
	&= \delta^{-\alpha-n}.
	\end{split}
	\]
	Similarly, for $J^{\alpha, \sigma, 2}_{1,1,2}(z_1, z_2)$, 
	after integrating w.r.t. $y$ (since $\sigma+1 > 2s$) we get
	\[
	\begin{split}
	J^{\alpha, \sigma, 2}_{1,1,2}(z_1, z_2)
	&\aleq \int_{\CA_1 \cap \CB_1 \cap \CI_2} |x-z_1|^{s_1-\sigma-\alpha-n}\, |x-z_2|^{s_2-n-1}\,|x-z_2|^{-2s+\sigma+1}\, dx\\
	&= \int_{\CA_1 \cap \CB_1 \cap \CI_2} |x-z_1|^{s_1-\sigma-\alpha-n}\, |x-z_2|^{-s_1+\sigma-n}\, dx.
	\end{split}
	\]	
	By assumption $s_1 - \sigma - \alpha < 0$ and $-s_1+\sigma > 0$.
	Then, using H\"older's inequality, we let $p > 1$ be so small such that $(s_1-\sigma-\alpha-n)p < -n$ and $(-s_1+\sigma-n)p'> -n$. Then,
	\[
	\begin{split}
	J^{\alpha, \sigma, 2}_{1,1,2}(z_1, z_2)
	&\aleq \brac{\int_{|x-z_1| \ageq \delta} |x-z_1|^{(s_1-\sigma-\alpha-n)p}\, dx}^{\frac{1}{p}}\, \brac{\int_{|x-z_2|\aleq \delta} |x-z_2|^{(-s_1+\sigma-n)p'}\, dx}^{\frac{1}{p'}}\\
	&\aleq \brac{\delta^{(s_1-\sigma-\alpha-n)p+n}}^{\frac{1}{p}}\, \brac{\delta^{(-s_1+\sigma-n)p'+n}}^{\frac{1}{p'}}\\
	&= \delta^{-\alpha-n}.
	\end{split}
	\]

	To estimate $J^{\alpha, \sigma, 2}_{1,1,3}(z_1, z_2)$, we also argue similarly, but since $|x-z_1| \aeq |x-z_2|$ we arrive at
	\[
	\begin{split}
	J^{\alpha, \sigma, 2}_{1,1,3}(z_1, z_2)
	&\aleq \int_{\CA_1 \cap \CB_1 \cap \CI_3} |x-z_1|^{-s_2-\alpha+1-n}\, |x-z_2|^{s_2-n-1}\, dx \\
	&= \int_{|x-z_1| \ageq \delta} |x-z_1|^{-\alpha-2n}  \,dx \\
	&\aeq \delta^{-\alpha-n}.
	\end{split}
	\]

	\underline{Estimating $J^{\alpha, \sigma, l}_{1,2,1}$, $J^{\alpha, \sigma, l}_{1,2,2}$ and $J^{\alpha, \sigma, l}_{1,2,3}$} :
	
	if $(x, y) \in \CA_1$ we have $|x-z_1| \aeq |y-z_1|$ and thus we can estimate with \Cref{la:fundthm} for any $\gamma \in [0,1]$,
	\[||x-z_1|^{s_1-\alpha-n}-|y-z_1|^{s_1-\alpha-n}| \aleq |x-y|\, |x-z_1|^{s_1-\alpha-1-n} \aleq |x-y|^{\gamma} |x-z_1|^{s_1-\alpha-\gamma-n}.\] 
	For $(x,y) \in \CB_2$, we estimate $||x-z_2|^{s_2-n}-|y-z_2|^{s_2-n}| \aleq |x-z_2|^{s_2-n}$.
	
	Thus, for $(x, y) \in \CA_1 \cap \CB_2$ for any $\gamma \in [0,1]$
	\[
	\begin{split}
	\kappa^{\alpha, \sigma}_1(z_1,z_2)
	&\aleq \frac{|x-z_1|^{s_1-\alpha-\gamma-n}\, |x-z_2|^{s_2-n}}{|x-y|^{n+2s-\gamma}}.
	\end{split}
	\] 
	Since for $(x,y) \in \CA_1$ we have $|x-y| \aleq |x-z_1|$, we also get the same estimate for the second type kernel, for any $\gamma \in [0,\sigma)$
	\[
	\kappa^{\alpha, \sigma}_2(z_1,z_2)
	\aleq  \frac{|x-z_1|^{s_1-\alpha-\gamma-n}\, |x-z_2|^{s_2-n}}{|x-y|^{n+2s-\gamma}}.
	\]
	Taking $\gamma < 2s$, we integrate w.r.t. $y$ variable,  observing that $(x,y) \in \CA_1 \cap \CB_2$ implies that $|x-y| \ageq |x-z_2|$ and thus $\int_{y \in \CA_1\cap \CA_2} |x-y|^{-n-2s+\gamma} dy \aleq |x-z_2|^{-2s+\gamma}$.
	
	We thus obtain for any $\gamma \in [0,\sigma)$, $\gamma < 1$,
	\[
	J^{\alpha, \sigma, l}_{1,2,k}(z_1, z_2) \aleq \int_{\CI_k} |x-z_1|^{s_1-\alpha-\gamma-n}\, |x-z_2|^{s_2-n}\, |x-z_2|^{\gamma-2s}\, dx =
	\int_{\CI_k} |x-z_1|^{s_1-\alpha-\gamma-n}\, |x-z_2|^{\gamma-s_1-n}\, dx.
	\]
	Here, with a slight abuse of notation we identify $\CI_k$ with the set of $x \in \R^n$ such that $\{x\}\times \R^n \subset \CI_k$.
	
	Taking $\gamma = 0$ we thus have 
	\[
	\begin{split}
	J^{\alpha, \sigma, l}_{1,2,1}(z_1, z_2) 
 	\aleq&   	\int_{|x-z_1| \aleq \delta, |x-z_2| \ageq \delta} 
  	|x-z_1|^{s_1-\alpha-n}\, |x-z_2|^{-s_1-n}\, dx\\
  	\aleq& \delta^{-s_1-n}\int_{|x-z_1| \aleq \delta} 
  	|x-z_1|^{s_1-\alpha-n} \, dx\\
  	\aeq& \delta^{\alpha-n}.
  	\end{split}
	\]
    Similarly we can estimate (again, $\gamma = 0$)
    \[
	\begin{split}
	J^{\alpha, \sigma, l}_{1,2,3}(z_1, z_2) 
 	\aleq&   	\int_{|x-z_1| \aeq |x-z_2| \ageq \delta} 
  	|x-z_1|^{s_1-\alpha-n}\, |x-z_2|^{-s_1-n}\, dx\\
  	\aleq& \delta^{s_1-\alpha-n}\int_{|x-z_2| \ageq \delta} 
  	|x-z_2|^{-s_1-n} \, dx\\
  	\aeq& \delta^{\alpha-n}.
  	\end{split}
	\]
    For the remaining case $J^{\alpha, \sigma, l}_{1,2,2}$ we choose $\gamma > s_1$ (which is possible with the restraints on $\gamma$ above), and 
    have 
    \[
	\begin{split}
	J^{\alpha, \sigma, l}_{1,2,2}(z_1, z_2) 
 	\aleq&   	\int_{|x-z_1| \ageq \delta, |x-z_2| \aleq \delta} 
  	|x-z_1|^{s_1-\alpha-\gamma-n}\, |x-z_2|^{\gamma-s_1-n}\, dx\\
  	\aleq& \delta^{\gamma-s_1-n}\int_{|x-z_1| \ageq \delta} 
  	|x-z_1|^{s_1-\alpha-\gamma-n} \, dx\\
  	\aeq& \delta^{\alpha-n}.
  	\end{split}
	\]

	\underline{Estimating $J^{\alpha, \sigma, l}_{1,3,1}$, $J^{\alpha, \sigma, l}_{1,3,2}$ and $J^{\alpha, \sigma, l}_{1,3,3}$} :

	Let $(x, y) \in \CA_1 \cap \CB_3$. Then, by \Cref{la:fundthm} and $|x-y| \aleq |x-z_1|$,
	it follow that 
	\[
	\kappa^{\alpha, \sigma}_1(z_1,z_2)
	\aleq  \frac{|x-z_1|^{s_1-\alpha-1-n}\, |y-z_2|^{s_2-n}}{|x-y|^{n+2s-1}} \aleq 
	\frac{|x-z_1|^{s_1-\alpha-1-n}\, |y-z_2|^{s_2-n}}{|x-y|^{n+2s-1}}
	\] 
	and we have a similar estimate for the second kernel.
	\[
	\kappa^{\alpha, \sigma}_2(z_1,z_2)
	\aleq  \frac{|x-z_1|^{s_1-\alpha-\sigma-1-n}\, |y-z_2|^{s_2-n}}{|x-y|^{n+2s-\sigma-1}} \aleq 
	\frac{|x-z_1|^{s_1-\alpha-1-n}\, |y-z_2|^{s_2-n}}{|x-y|^{n+2s-1}}.
	\]
	We can treat these two kernels now almost verbatim. 
	Since $|x-y| \aleq |x-z_1|$ and $|x-z_2| \aeq |x-y|$ for $(x,y) \in \CA_1\cap\CB_3$, we have
	\[
	\begin{split}
	J^{\alpha, \sigma, l}_{1,3,1}(z_1,z_2) 
	&\aleq 
	\iint_{\CA_1 \cap \CB_3 \cap \CI_1} \frac{|x-z_1|^{s_1-\alpha-n}\, |y-z_2|^{s_2-n}}{|x-y|^{n+2s}}\, dx\, dy\\
	&\aeq \iint_{\CA_1 \cap \CB_3 \cap \CI_1} \frac{|x-z_1|^{s_1-\alpha-n}\, |y-z_2|^{s_2-n}}{|x-z_2|^{n+2s}}\, dx\, dy.
	\end{split}
	\]
	Since $|y-z_2| \aleq |x-z_2|$ and $s_2-n > -n$, we obtain $\int_{\CA_1 \cap \CB_3 \cap \CI_1} |y-z_2|^{s_2-n}\,dy \aleq |x-z_2|^{s_2}$ and so
	\[
	\begin{split}
	J^{\alpha, \sigma, l}_{1,3,1}(z_1,z_2) 
	&\aleq \int_{\CA_1 \cap \CB_3 \cap \CI_1} \frac{|x-z_1|^{s_1-\alpha-n}\, |x-z_2|^{s_2}}{|x-z_2|^{n+2s}} \, dx\\
	&\aeq \int_{\CA_1 \cap \CB_3 \cap \CI_1} |x-z_1|^{s_1-\alpha-n}\, |x-z_2|^{-s_1-n} \, dx.
	\end{split}
	\]
	
	Let $p>1$ so small such that $(s_1-\alpha-n)p > -n$ and $(-s_1-n)p' < -n$. Then, by H\"older's inequality,
	\[
	\begin{split}
	J^{\alpha, \sigma, l}_{1,3,1}(z_1,z_2) 
	&\aleq \left(\int_{\CA_1 \cap \CB_3 \cap \CI_1} |x-z_1|^{(s_1-\alpha-n)p}\ dx \right)^{\frac{1}{p}}\, \left(\int_{\CA_1 \cap \CB_3 \cap \CI_1} |x-z_2|^{(-s_1-n)p'}\ dx \right)^{\frac{1}{p'}}\\
	&\aleq \left(\delta^{(s_1-\alpha-n)p+n}\right)^\frac{1}{p}\,\left(\delta^{(-s_1-n)p'+n}\right)^\frac{1}{p'}\\
	&\aeq \delta^{-\alpha-n}.
	\end{split}
	\]

	For $J^{\alpha, \sigma, l}_{1,3,2}(z_1,z_2)$, we consider
	\[
	\begin{split}
	J^{\alpha, \sigma, l}_{1,3,2}(z_1,z_2) 
	&\aleq 
	\iint_{\CA_1 \cap \CB_3 \cap \CI_2} \frac{|x-z_1|^{s_1-\alpha-1-n}\, |y-z_2|^{s_2-n}}{|x-y|^{n+2s-1}}\, dx\, dy\\
	&\aeq \iint_{\CA_1 \cap \CB_3 \cap \CI_2} \frac{|x-z_1|^{s_1-\alpha-1-n}\, |y-z_2|^{s_2-n}}{|x-z_2|^{n+2s-1}}\, dx\, dy.
	\end{split}
	\]
	Since $|y-z_2| \aleq |x-z_2|$ and $s_2-n > -n$, we obtain $\int_{\CA_1 \cap \CB_3 \cap \CI_2} |y-z_2|^{s_2-n}\,dy \aleq |x-z_2|^{s_2}$ and so
	\[
	\begin{split}
	J^{\alpha, \sigma, l}_{1,3,2}(z_1,z_2) 
	&\aleq \int_{\CA_1 \cap \CB_3 \cap \CI_2} \frac{|x-z_1|^{s_1-\alpha-1-n}\, |x-z_2|^{s_2}}{|x-z_2|^{n+2s-1}} \, dx\\
	&\aeq \int_{\CA_1 \cap \CB_3 \cap \CI_2} |x-z_1|^{s_1-\alpha-1-n}\, |x-z_2|^{-s_1+1-n} \, dx.
	\end{split}
	\]
	
	Let $p>1$ so small such that $(s_1-\alpha-1-n)p < -n$ and $(-s_1+1-n)p' > -n$. Then, by H\"older's inequality,
	\[
	\begin{split}
	J^{\alpha, \sigma, l}_{1,3,2}(z_1,z_2) 
	&\aleq \left(\int_{\CA_1 \cap \CB_3 \cap \CI_2} |x-z_1|^{(s_1-\alpha-1-n)p}\ dx \right)^{\frac{1}{p}}\, \left(\int_{\CA_1 \cap \CB_3 \cap \CI_2} |x-z_2|^{(-s_1+1-n)p'}\ dx \right)^{\frac{1}{p'}}\\
	&\aleq \left(\delta^{(s_1-\alpha-1-n)p+n}\right)^\frac{1}{p}\,\left(\delta^{(-s_1+1-n)p'+n}\right)^\frac{1}{p'}\\
	&\aeq \delta^{-\alpha-n}.
	\end{split}
	\]

	Similarly, we have
	\[
	\begin{split}
	J^{\alpha, \sigma, l}_{1,3,3}(z_1,z_2) 
	&\aleq \int_{|x-z_1| \aeq |x-z_2| \ageq \delta} |x-z_1|^{s_1-\alpha-1-n}\, |x-z_2|^{-s_1+1-n} \, dx
	\aleq \delta^{-\alpha-n}.
	\end{split}
	\]


	\underline{Estimating $J^{\alpha, \sigma, l}_{2,1,1}$, $J^{\alpha, \sigma, l}_{2,1,2}$ and $J^{\alpha, \sigma, l}_{2,1,3}$} :
	for $(x, y) \in \CB_1$, by \Cref{la:fundthm} we have 
	\[
	\abs{|x-z_2|^{s_2-n}-|y-z_2|^{s_2-n}} \aleq |x-z_2|^{s_2-n-\gamma}|x-y|^\gamma
	\]
	for any $\gamma\in [0,1]$. 
	Thus, for $(x, y) \in \CA_2 \cap \CB_1$, we have
	\[
	\kappa^{\alpha, \sigma}_1(z_1,z_2) \aleq   \frac{|x-z_1|^{s_1-\alpha-n}\, |x-z_2|^{s_2-n-\gamma}}{|x-y|^{n+2s-\gamma}}.
	\] 

	Since in this case $|y-z_1| \aeq |x-y|$, we also have
	\[
	\kappa^{\alpha, \sigma}_2(z_1,z_2) \aleq
	\frac{|x-z_1|^{s_1-\alpha-n}|y-z_1|^{-\sigma}\, |x-z_2|^{s_2-n-\gamma}}{|x-y|^{n+2s-\sigma-\gamma}}
	\aeq \frac{|x-z_1|^{s_1-\alpha-n} |x-z_2|^{s_2-n-\gamma}}{|x-y|^{n+2s-\gamma}}.
	\]
	Since $|x-z_1| \aleq |x-y|$, we can further obtain for any $t> 0$,
	\[
	\kappa^{\alpha, \sigma}_l(z_1,z_2) \aleq
	\frac{|x-z_1|^{s_1-\alpha-t-n} |x-z_2|^{s_2-n-\gamma}}{|x-y|^{n+2s-\gamma-t}}.
	\]
	To obtain the estimate when 
	$(x, y) \in \CA_2 \cap \CB_1 \cap \CI_1$, we let $t:= s_1 -5\theta$ and choose $\gamma=1$. Then $2s-1-t < 0$ and so we can integrate in $y$ to get
	\[
	\begin{split}
	J^{\alpha, \sigma, l}_{2,1,1}(z_1,z_2)
	&\aleq \iint_{\CA_2 \cap \CB_1 \cap \CI_1} \frac{|x-z_1|^{s_1-\alpha-t-n}\, |x-z_2|^{s_2-n-1}}{|x-y|^{n+2s-1-t}}\, dx\, dy\\
	&\aleq \int_{\CA_2 \cap \CB_1 \cap \CI_1} |x-z_1|^{s_1-\alpha-t-n}\, |x-z_2|^{s_2-n-1}\, |x-z_2|^{1+t-2s}\, dx\\
	&\aleq \int_{\CA_2 \cap \CB_1 \cap \CI_1} |x-z_1|^{s_1-\alpha-t-n}\, |x-z_2|^{-s_1+t-n}\, dx\\
	&\aleq \delta^{-\alpha-n}
	\end{split}
	\]
	where the last line follows from H\"older's inequality using that $s_1-\alpha-t > 0$ and $t -s_1 < 0$.

	For $(x, y) \in \CA_2 \cap \CB_1 \cap \CI_2$,
	we choose $\gamma < s_2$ and so we integrate in $y$
	\[
	\begin{split}
	J^{\alpha, \sigma, l}_{2,1,2}(z_1,z_2)
	&\aleq \iint_{\CA_2 \cap \CB_1 \cap \CI_2} \frac{|x-z_1|^{s_1-\alpha-n}\, |x-z_2|^{s_2-n-\gamma}}{|x-y|^{n+2s-\gamma}}\, dx\, dy\\
	&\aleq \int_{\CA_2 \cap \CB_1 \cap \CI_2} |x-z_1|^{s_1-\alpha-n}\, |x-z_2|^{s_2-\gamma-n}\, |x-z_1|^{\gamma-2s}\, dx\\
	&\aleq \int_{\CA_2 \cap \CB_1 \cap \CI_2} |x-z_1|^{\gamma-s_2-\alpha-n}\, |x-z_2|^{s_2-\gamma-n}\, dx\\
	&\aleq \delta^{-\alpha-n}
	\end{split}
	\]
	where the last line follows from H\"older's inequality as above, using that $\gamma-s_2-\alpha < 0$ and $s_2-\gamma > 0$.

	For $(x, y) \in \CA_2 \cap \CB_1 \cap \CI_3$,
	we have 
	$|x-z_1| \aeq |x-z_2|$ and $|x-z_1| \ageq \delta$ and so
	\[
	\begin{split}
	J^{\alpha, \sigma, l}_{2,1,3}(z_1,z_2)
	\aleq& \int_{\CA_2 \cap \CB_1 \cap \CI_3} |x-z_1|^{s_1-\alpha-t-n}\, |x-z_2|^{-s_1+t-n} \,dx\\
	\aeq& \int_{|x-z_1| \ageq \delta} |x-z_1|^{-\alpha-2n} \,dx\\
	\aeq& \ \delta^{-\alpha-n}.
	\end{split}
	\]

	\underline{Estimating $J^{\alpha, \sigma, l}_{2,2,1}$, $J^{\alpha, \sigma, l}_{2,2,2}$ and $J^{\alpha, \sigma, l}_{2,2,3}$} :
	for $(x, y) \in \CA_2 \cap \CB_2$, 
	\[
	\kappa^{\alpha, \sigma}_1(z_1,z_2) \aleq
	 \frac{|x-z_1|^{s_1-\alpha-n}\, |x-z_2|^{s_2-n}}{|x-y|^{n+2s}}.
	\]
	and since in this case $|y-z_1| \aeq |x-y|$,
	\[
	\kappa^{\alpha, \sigma}_2(z_1,z_2) \aleq
	 \frac{|x-z_1|^{s_1-\alpha-n}|y-z_1|^{-\sigma}\, |x-z_2|^{s_2-n}}{|x-y|^{n+2s-\sigma}}
	 \aeq \frac{|x-z_1|^{s_1-\alpha-n} |x-z_2|^{s_2-n}}{|x-y|^{n+2s}}.
	\]
	Integrating w.r.t. $y$, 
	\[
	\begin{split}
	J^{\alpha, \sigma, l}_{2,2,1}(z_1,z_2) 
	&\aleq
	\int_{\CA_2 \cap \CB_2 \cap \CI_1}  |x-z_1|^{s_1-\alpha-n}\, |x-z_2|^{s_2-n}\, |x-z_2|^{-2s}\,dx\\
	&\aleq
	\int_{\CA_2 \cap \CB_2 \cap \CI_1}  |x-z_1|^{s_1-\alpha-n}\, |x-z_2|^{-s_1-n}\, dx.
	\end{split}
	\]
	Let $p > 1$ be small enough so that $(s_1-\alpha-n)p > -n$ and $(-s_1-n)p' < -n$. Then, 
	by H\"older's inequality, it follows that
	\[
	\begin{split}
	J^{\alpha, \sigma, l}_{2,2,1}(z_1,z_2)  
	&\aleq \left(\int_{|x-z_1| \aleq \delta}  |x-z_1|^{(s_1-\alpha-n)p}\, dx\right)^\frac{1}{p} \, \left(\int_{|x-z_2| \ageq \delta}  |x-z_1|^{(-s_1-n)p'}\, dx\right)^\frac{1}{p'}\\
	&\aleq \left( \delta^{(s_1-\alpha-n)p+n} \right)^\frac{1}{p}\, \left( \delta^{(-s_1-n)p'+n} \right)^\frac{1}{p'}\\
	&\aeq \delta^{-\alpha-n}.
	\end{split}
	\]
	
	We also get the $J^{\alpha, \sigma, l}_{2,2,2}$ estimate by the similar way with $J^{\alpha, \sigma, l}_{2,2,1}$.

	For $(x, y) \in \CA_2 \cap \CB_2 \cap \CI_3$, we follow by the same argument as before and after integrating w.r.t. $y$ we have
	\[
	J^{\alpha, \sigma, l}_{2,2,3}(z_1,z_2)
	\aleq \int_{|x-z_1| \ageq \delta} |x-z_1|^{-\alpha-2n}\,dx 
	\ \aleq \ \delta^{-\alpha-n}.
	\]

	\underline{Estimating $J^{\alpha, \sigma, l}_{2,3,1}$, $J^{\alpha, \sigma, l}_{2,3,2}$ and $J^{\alpha, \sigma, l}_{2,3,3}$} :
	let $(x, y) \in \CA_2 \cap \CB_3$. It follows that
	\[
	\kappa^{\alpha, \sigma}_1(z_1,z_2) \aleq \frac{|x-z_1|^{s_1-\alpha-n}\, |y-z_2|^{s_2-n}}{|x-y|^{n+2s}}
	\]
	and 
	\[
	\kappa^{\alpha, \sigma}_2(z_1,z_2) 
	 \aleq \frac{|x-z_1|^{s_1-\alpha-n}|y-z_1|^{-\sigma}\, |y-z_2|^{s_2-n}}{|x-y|^{n+2s-\sigma}}
	 \aleq \frac{|x-z_1|^{s_1-\alpha-n}\, |y-z_2|^{s_2-n}}{|x-y|^{n+2s}}
	\]
	because $|y-z_1| \aeq |x-y|$.
	
	Recall that $|x-y| \aeq |x-z_2|$ in our setting, so
	\[
	\begin{split}
	&J^{\alpha, \sigma, l}_{2,3,1}(z_1,z_2) \\
	&\aleq \int_{\CA_2 \cap \CB_3 \cap \CI_1} |x-z_1|^{s_1-\alpha-n} \left( \int_{\CA_2 \cap \CB_3 \cap \CI_1}|y-z_2|^{s_2-n}\, |x-y|^{-2s-n}\, dy \right)  \, dx\\
	&\aeq \int_{\CA_2 \cap \CB_3 \cap \CI_1} |x-z_1|^{s_1-\alpha-n} |x-z_2|^{-2s-n} \int_{|y-z_2| \aleq |x-z_2|}|y-z_2|^{s_2-n} dy  \, dx\\
	&\aleq \int_{\CA_2 \cap \CB_3 \cap \CI_1} |x-z_1|^{s_1-\alpha-n}\, |x-z_2|^{-s_1-n}  \,dx.
	\end{split}
	\]
	By H\"older's inequality, let $q>1$ be small enough so that $(s_1-\alpha-n)q > -n$ and $(-s_1-n)q' < -n$ and then,
	\[
	\begin{split}
	J^{\alpha, \sigma, l}_{2,3,1}(z_1,z_2) 
	&\aleq \left( \int_{|x-z_1| \aleq \delta} |x-z_1|^{(s_1-\alpha-n)q}  \,dx \right)^{\frac{1}{q}} \, \left( \int_{|x-z_2| \ageq \delta} |x-z_2|^{(-s_1-n)q'}  \,dx \right)^{\frac{1}{q'}}\\
	&\aleq \left( \delta^{(s_1-\alpha-n)q + n} \right)^{\frac{1}{q}} \, \left( \delta^{(-s_1-n)q' + n} \right)^{\frac{1}{q'}}\\
	&\aeq \delta^{-\alpha-n}.
	\end{split} 
	\]

	To estimate $J^{\alpha, \sigma, l}_{2,3,2}$, we do the same argument as  $J^{\alpha, \sigma, l}_{2,3,1}(z_1,z_2)$:
	let $p > 1$ be such that  $(s_2-n)p > -n$ and $(-2s-n)p' < -n$.  
	By H\"older's inequality,
	\[ 
	\begin{split}
	&J^{\alpha, \sigma, l}_{2,3,2}(z_1,z_2)\\
	&\aleq 
	\iint_{\CA_2 \cap \CB_3 \cap \CI_2} \frac{|x-z_1|^{s_1-\alpha-n}\, |y-z_2|^{s_2-n}}{|x-y|^{n+2s}}\, dx\, dy\\
	&\aleq \int_{\CA_2 \cap \CB_3 \cap \CI_2} |x-z_1|^{s_1-\alpha-n} \left( \int_{\CA_2 \cap \CB_3 \cap \CI_2}|y-z_2|^{s_2-n}\, |x-y|^{-2s-n}\, dy \right)  \,dx\\
	& \aleq \int_{\CA_2 \cap \CB_3 \cap \CI_2} |x-z_1|^{s_1-\alpha-n} \left( \int_{\CA_2 \cap \CB_3 \cap \CI_2}|y-z_2|^{(s_2-n)p}\, dy \right)^{\frac{1}{p}} \left( \int_{\CA_2 \cap \CB_3 \cap \CI_2}|x-y|^{(-2s-n)p'}\, dy \right)^{\frac{1}{p'}} \,dx\\
	&\aleq \int_{\CA_2 \cap \CB_3 \cap \CI_2} |x-z_1|^{s_1-\alpha-n} \left(|x-z_2|^{(s_2-n)p+n}\right)^{\frac{1}{p}}\, \left(|x-z_1|^{(-2s-n)p'+n}\right)^{\frac{1}{p'}}\,  \,dx\\
	&\aeq \int_{\CA_2 \cap \CB_3 \cap \CI_2} |x-z_1|^{-s_2-\alpha-2n+\frac{n}{p'}} |x-z_2|^{s_2-n+\frac{n}{p}}  \,dx.
	\end{split}
	\]

	Again, by H\"older's inequality, let $q>1$ be small enough so that $(-s_2-\alpha-2n+\frac{n}{p'})q < -n$ and $(s_2-n+\frac{n}{p})q' > -n$ and then,
	\[
	\begin{split}
	J^{\alpha, \sigma, l}_{2,3,2}(z_1,z_2)
	&\aleq \left( \int_{|x-z_1| \ageq \delta} |x-z_1|^{(-s_2-\alpha-2n+\frac{n}{p'})q}  \,dx \right)^{\frac{1}{q}} \, \left( \int_{|x-z_2| \aleq \delta} |x-z_2|^{(s_2-n+\frac{n}{p})q'}  \,dx \right)^{\frac{1}{q'}}\\
	&\aleq \left( \delta^{(-s_2-\alpha-2n+\frac{n}{p'})q + n} \right)^{\frac{1}{q}} \, \left( \delta^{(s_2-n+\frac{n}{p})q' + n} \right)^{\frac{1}{q'}}\\
	&\aeq \delta^{-\alpha-n}.
	\end{split} 
	\]

	For $(x, y) \in \CA_2 \cap \CB_3 \cap \CI_3$, we also argue similarly, but since $|x-z_1| \aeq |x-z_2|$ we have
	\[ 
	\begin{split}
	J^{\alpha, \sigma, l}_{2,3,3}(z_1,z_2)
	&\aeq \int _{\CA_2 \cap \CB_3 \cap \CI_3} |x-z_1|^{s_1-\alpha-n} |x-z_2|^{-s_1-n}  \,dx\\
	&\aeq \int_{|x-z_1| \ageq \delta} |x-z_1|^{-\alpha-2n} \,dx\\
	&\aleq \delta^{-\alpha-n}.
	\end{split}
	\]

	
	\underline{Estimating $J^{\alpha, \sigma, 2}_{3,j,k}$, for $j = 1,2,3$ and $k = 1,2,3$} :
	the estimate 
	$$
	J^{\alpha, \sigma, 2}_{3,j,k}(z_1,z_2) \aleq \delta^{-\alpha-n}
	$$
	holds for all $j = 1,2,3$ and all $k = 1,2,3$ by using the same proof as $J^{\alpha, \sigma, 2}_{2,j,k}$ for all $j = 1,2,3$ and all $k = 1,2,3$, respectively, since we observe that for $(x, y) \in \CA_3$,
	\[
	\min\{|x-z_1|^{s_1-\alpha-\sigma-n},|y-z_1|^{s_1-\alpha-\sigma-n}\} = |x-z_1|^{s_1-\alpha-\sigma-n}
	\]
	and $|x-z_1| \aeq |x-y|$. 
	
	\underline{Estimating $J^{\alpha, \sigma, 1}_{3,1,1}$, $J^{\alpha, \sigma, 1}_{3,1,2}$ and $J^{\alpha, \sigma, 1}_{3,1,3}$} :
	for $(x, y) \in \CB_1$, by \Cref{la:fundthm}, we have
	\[
	\abs{|x-z_2|^{s_2-n}-|y-z_2|^{s_2-n}} \aleq |x-z_2|^{s_2-n-1}|x-y|.
	\]
	For $(x,y) \in \CA_3$ we have $|x-y| \aeq |x-z_1|$ and thus for $(x,y) \in \CA_3 \cap \CB_1$,
	\[
	\kappa^{\alpha, \sigma}_1(z_1,z_2) \aleq \frac{|y-z_1|^{s_1-\alpha-n}\, |x-z_2|^{s_2-1-n}}{|x-y|^{n+2s-1}} \aeq \frac{|y-z_1|^{s_1-\alpha-n}\, |x-z_2|^{s_2-1-n}}{|x-z_1|^{n+2s-1}}. 
	\]
	Thus,
	\[
	\begin{split}
	J^{\alpha, \sigma, 1}_{3,1,1}(z_1,z_2)
	&\aleq \iint_{\CA_3 \cap \CB_1 \cap \CI_1} \frac{|y-z_1|^{s_1-\alpha-n}\, |x-z_2|^{s_2-1-n}}{|x-y|^{n+2s-1}}\, dx\, dy\\
	&\aeq \int_{\CA_3 \cap \CB_1 \cap \CI_1} \left(\int_{\CA_3 \cap \CB_1 \cap \CI_1} |y-z_1|^{s_1-\alpha-n}\,dy\right) \frac{|x-z_2|^{s_2-1-n}}{|x-z_1|^{n+2s-1}}\, dx\\
	&\aleq \int_{\CA_3 \cap \CB_1 \cap \CI_1}  \frac{|x-z_1|^{s_1-\alpha}\,|x-z_2|^{s_2-1-n}}{|x-z_1|^{n+2s-1}}\, dx\\
	&\aleq \int_{\CA_3 \cap \CB_1 \cap \CI_1}  |x-z_1|^{-s_2-\alpha+1-n}\,|x-z_2|^{s_2-1-n}\, dx.
	\end{split}
	\]
	By H\"older's inequality, let $p>1$ be small enough so that $(-s_2-\alpha+1-n)p > -n$ and $(s_2-1-n)p' < -n$. We obtain
	\[
	\begin{split}
	J^{\alpha, \sigma, 1}_{3,1,1}(z_1,z_2)
	&\aleq \left(  \int_{|x-z_1|\aleq\delta}  |x-z_1|^{(-s_2-\alpha+1-n)p}\, dx \right)^{\frac{1}{p}} \,
	\left(  \int_{|x-z_2|\ageq\delta}  |x-z_2|^{(s_2-1-n)p'}\, dx \right)^{\frac{1}{p'}}\\
	&\aleq \left( \delta^{(-s_2-\alpha+1-n)p+n} \right)^{\frac{1}{p}} \, \left( \delta^{(s_2-1-n)p'+n} \right)^{\frac{1}{p'}}\\
	&\aeq \delta^{-\alpha-n}.
	\end{split}
	\]

	Next, for $(x, y) \in \CA_3 \cap \CB_1 \cap \CI_2$, we consider 	
	\[
	\kappa^{\alpha, \sigma}_1(z_1,z_2) \aleq \frac{|y-z_1|^{s_1-\alpha-n}\, |x-z_2|^{s_2-n}}{|x-y|^{n+2s}} \aeq \frac{|y-z_1|^{s_1-\alpha-n}\, |x-z_2|^{s_2-n}}{|x-z_1|^{n+2s}}. 
	\]
	Observe that $|y-z_1| \aleq |x-z_1|$ and thus $\int_{y \in \CA_3 \cap \CB_1} |y-z_1|^{s_1-\alpha-n}\,dy \aleq |x-z_1|^{s_1-\alpha}$. It follows that 
	\[
	\begin{split}
	J^{\alpha, \sigma, 1}_{3,1,2}(z_1,z_2)
	&\aleq \int_{\CA_3 \cap \CB_1 \cap \CI_2}  \frac{|x-z_1|^{s_1-\alpha}\,|x-z_2|^{s_2-n}}{|x-z_1|^{n+2s}}\, dx\\
	&\aleq \int_{\CA_3 \cap \CB_1 \cap \CI_2}  |x-z_1|^{-s_2-\alpha-n}\,|x-z_2|^{s_2-n}\, dx.
	\end{split}
	\]
	By H\"older's inequality, let $p>1$ be small enough so that $(-s_2-\alpha-n)p < -n$ and $(s_2-n)p' > -n$. We obtain
	\[
	\begin{split}
	&J^{\alpha, \sigma, 1}_{3,1,2}(z_1,z_2)\\
	&\aleq \left(  \int_{|x-z_1|\ageq\delta}  |x-z_1|^{(-s_2-\alpha-n)p}\, dx \right)^{\frac{1}{p}} \,
	\left(  \int_{|x-z_2|\aleq\delta}  |x-z_2|^{(s_2-n)p'}\, dx \right)^{\frac{1}{p'}}\\
	&\aleq \left( \delta^{(-s_2-\alpha-n)p+n} \right)^{\frac{1}{p}} \, \left( \delta^{(s_2-n)p'+n} \right)^{\frac{1}{p'}}\\
	&\aeq \delta^{-\alpha-n}.
	\end{split}
	\]

	For $(x, y) \in \CA_3 \cap \CB_1 \cap \CI_3$,
	\[
	\begin{split}
	J^{\alpha, \sigma, 1}_{3,1,3}(z_1,z_2)
	&\aleq \int_{\CA_3 \cap \CB_1 \cap \CI_3}   |x-z_1|^{-s_2-\alpha+1-n} |x-z_2|^{s_2-1-n} \, dx\\
	&\aeq \int_{|x-z_1| \ageq \delta}  |x-z_1|^{-\alpha-2n}\, dx\\
	&\aleq \delta^{-\alpha-n}.
	\end{split}
	\]

	\underline{Estimating $J^{\alpha, \sigma, 1}_{3,2,1}$, $J^{\alpha, \sigma, 1}_{3,2,2}$ and $J^{\alpha, \sigma, 1}_{3,2,3}$} :
	for $(x, y) \in \CA_3 \cap \CB_2$, we have
	\[
	\kappa^{\alpha, \sigma}_1(z_1,z_2) \aleq \frac{|y-z_1|^{s_1-\alpha-n}\, |x-z_2|^{s_2-n}}{|x-y|^{n+2s}}.
	\]
	
	Thus,
	\[
	\begin{split}
	J^{\alpha, \sigma, 1}_{3,2,1}(z_1,z_2) 
	&\aleq
	\iint_{\CA_3 \cap \CB_2 \cap \CI_1} \frac{|y-z_1|^{s_1-\alpha-n}\, |x-z_2|^{s_2-n}}{|x-y|^{n+2s}}\,dx\, dy\\
	&\aeq \int_{\CA_3 \cap \CB_2 \cap \CI_1} |x-z_2|^{s_2-n}
	\left( \int_{\CA_3 \cap \CB_2 \cap \CI_1} |y-z_1|^{s_1-\alpha-n}\, |x-y|^{-n-2s}\, dy \right)
	\, dx.
	\end{split}
	\]
	Let $p>1$ be so small such that $(s_1-\alpha-n)p > -n$ and $(-2s-n)p' < -n$. Then, 
	by H\"older's inequality, 
	\[
	\begin{split}
	&J^{\alpha, \sigma, 1}_{3,2,1}(z_1,z_2) \\ 
	&\aleq \int_{\CA_3 \cap \CB_2 \cap \CI_1} |x-z_2|^{s_2-n}
	 \left(\int_{\CA_3 \cap \CB_2 \cap \CI_1}  |y-z_1|^{(s_1-\alpha-n)p}\, dy\right)^\frac{1}{p} \left(\int_{\CA_3 \cap \CB_2 \cap \CI_1}  |x-y|^{(-2s-n)p'}\, dy\right)^\frac{1}{p'}\, dx\\
	&\aleq \int_{\CA_3 \cap \CB_2 \cap \CI_1} |x-z_2|^{s_2-n}
	\left(  |x-z_1|^{(s_1-\alpha-n)p+n} \right)^\frac{1}{p} \left(  |x-z_2|^{(-2s-n)p'+n}\, \right)^\frac{1}{p'}\, dx\\
	&\aeq \int_{\CA_3 \cap \CB_2 \cap \CI_1} |x-z_2|^{-s_1-2n+\frac{n}{p'}}
	  |x-z_1|^{s_1-\alpha-n+\frac{n}{p}}\, dx.
	\end{split}
	\]
	Again, by H\"older's inequality, let $q>1$ be small so that $(s_1-\alpha-n+\frac{n}{p})q > -n$ and $(-s_1-2n+\frac{n}{p'})q' < -n$. Hence, 
	\[
	\begin{split}
	&J^{\alpha, \sigma, 1}_{3,2,1}(z_1,z_2) \\ 
	&\aleq \left(\int_{|x-z_1| \aleq \delta} |x-z_1|^{(s_1-\alpha-n+\frac{n}{p})q}\, dx \right)^\frac{1}{q}\, \left(\int_{|x-z_2| \ageq \delta} |x-z_2|^{(-s_1-2n+\frac{n}{p'})q'}\, dx \right)^\frac{1}{q'}\\
	&\aleq \left( \delta^{(s_1-\alpha-n+\frac{n}{p})q+n} \right)^\frac{1}{q}\, \left(\delta^{(-s_1-2n+\frac{n}{p'})q'+n} \right)^\frac{1}{q'}\\
	&\aeq \delta^{-\alpha-n}.
	\end{split}
	\]

	Next, we consider  
	\[ 
	\begin{split}
	J^{\alpha, \sigma, 1}_{3,2,2}(z_1,z_2)
	&\aleq \iint_{\CA_3 \cap \CB_2 \cap \CI_2} \frac{|y-z_1|^{s_1-\alpha-n}\, |x-z_2|^{s_2-n}}{|x-y|^{n+2s}}\, dx\, dy\\
	&\aleq \int_{\CA_3 \cap \CB_2 \cap \CI_2} |x-z_2|^{s_2-n} \left( \int_{\CA_3 \cap \CB_2 \cap \CI_2}|y-z_1|^{s_1-\alpha-n}\, |x-y|^{-n-2s}\, dy \right)\, dx.
	\end{split}
	\]
	
	By H\"older's inequality: we can choose $p > 1$ such that $(s_1-\alpha-n)p > -n$ and $(-2s-n)p' < -n$
	\[
	\begin{split}
	&J^{\alpha, \sigma, 1}_{3,2,2}(z_1,z_2)\\
	&\aleq \int_{\CA_3 \cap \CB_2 \cap \CI_2} |x-z_2|^{s_2-n} \left( \int_{|y-z_1| \aleq |x-z_1|}|y-z_1|^{(s_1-\alpha-n)p} \, dy \right)^{\frac{1}{p}} \left( \int_{|x-y| \ageq |x-z_1|}|x-y|^{(-2s-n)p'}\, dy \right)^{\frac{1}{p'}}  \, dx\\
	&\aleq \int_{\CA_3 \cap \CB_2 \cap \CI_2} |x-z_2|^{s_2-n} \left(|x-z_1|^{(s_1-\alpha-n)p+n}\right)^{\frac{1}{p}}\, \left(|x-z_1|^{(-2s-n)p'+n}\right)^{\frac{1}{p'}}\, dx\\
	&\aleq \int_{\CA_3 \cap \CB_2 \cap \CI_2} |x-z_2|^{s_2-n} |x-z_1|^{-s_2-\alpha-n}\, dx. 
	\end{split}
	\]

	Again, by H\"older's inequality, let $q>1$ be small enough so that $(-s_2-\alpha-n)q < -n$ and $(s_2-n)q' > -n$ we obtain
	\[
	\begin{split}
	J^{\alpha, \sigma, 1}_{3,2,2}(z_1,z_2)
	&\aleq
	\left( \int_{|x-z_1| \ageq \delta} |x-z_1|^{(-s_2-\alpha-n)q}\,dx \right)^{\frac{1}{q}}
	\left( \int_{|x-z_2| \aleq \delta} |x-z_2|^{(s_2-n)q'}\,dx \right)^{\frac{1}{q'}}\\
	&\aleq \left(\delta^{(-s_2-\alpha-n)q +n}\right)^{\frac{1}{q}} \, \left(\delta^{(s_2-n)q' +n}\right)^{\frac{1}{q'}}\\
	&\aeq \delta^{-\alpha-n}.
	\end{split}
	\]

	Similarly,  
	for $(x, y) \in \CA_3 \cap \CB_2 \cap \CI_3$ we get
	\[
	\begin{split}
	J^{\alpha, \sigma, 1}_{3,2,3}(z_1,z_2)
	&\aleq \int_{\CA_3 \cap \CB_2 \cap \CI_3} |x-z_2|^{s_2-n}\, |x-z_1|^{-s_2-\alpha-n}\, \chi_{\CI_3} \,dx\\
	&\aeq  \int_{|x-z_1|\ageq \delta} |x-z_1|^{-\alpha-2n} \,dx\\
	&\aleq \delta^{-\alpha-n}.
	\end{split}
	\]

	\underline{Estimating $J^{\alpha, \sigma, 1}_{3,3,1}$, $J^{\alpha, \sigma, 1}_{3,3,2}$ and $J^{\alpha, \sigma, 1}_{3,3,3}$} : 
	let $(x, y) \in \CA_3 \cap \CB_3$. Then $|x-y| \aeq |x-z_1|$ and $|x-y| \aeq |x-z_2|$, that is \[|x-z_1| \aeq |x-y| \aeq |x-z_2|.\]
	
	If moreover, $(x,y) \in \CI_1$, then $|x-z_1| \aleq \delta$ and $|x-z_2| \ageq \delta$, i.e. from the above, $|x-z_1| \aeq |x-z_2| \aeq \delta$.
	Then $|y-z_1| \aleq |x-z_1| \aeq \delta$ and $|y-z_2| \aleq |x-z_2| \aeq \delta$. So we have for all $(x, y) \in \CA_3 \cap \CB_3 \cap \CI_1$ that 
	\[
	 |y-z_2|, |y-z_1| \aleq \delta
	\]
    However, $|z_1-z_2| = \delta$. Then either $|y-z_1| \aeq \delta$ or $|y-z_2| \aeq \delta$. Indeed, otherwise if $|y-z_1|, |y-z_2| \ll \delta$ then $\delta = |z_1-z_2| \leq |y-z_1| + |y-z_2| \ll 2\delta$, a contradiction.
    
    So for $(x,y) \in \CA_3\cap \CB_3\cap\CI_1$ we have
	\[
	\kappa^{\alpha, \sigma}_1(z_1,z_2) 
	\aleq \frac{|y-z_1|^{s_1-\alpha-n}\, |y-z_2|^{s_2-n}}{|x-y|^{n+2s}}
	\aleq  \delta^{s_1-\alpha-n} \frac{|y-z_2|^{s_2-n}}{|x-y|^{n+2s}} +  
	\delta^{s_2-n} \frac{|y-z_1|^{s_1-\alpha-n}}{|x-y|^{n+2s}}
	\]
 so that 
 \[
 \begin{split}
  J^{\alpha, \sigma, 1}_{3,3,1}(z_1,z_2) \aleq&
  \int_{|y-z_2| \aleq \delta} \int_{|x-z_2| \ageq \delta} \delta^{s_1-\alpha-n} \frac{|y-z_2|^{s_2-n}}{|x-z_2|^{n+2s}}\, dx\,dy\\
  &+\int_{|y-z_1| \aleq \delta} \int_{|x-z_2| \ageq \delta} \delta^{s_1-\alpha-n} \frac{|y-z_1|^{s_2-n}}{|x-z_2|^{n+2s}}\, dx\,dy\\
  \aleq& \ \delta^{-n-\alpha}.
  \end{split}
 \]
The same happens if $(x,y) \in \CI_2$.
 
Finally, we estimate $J^{\alpha, \sigma, 1}_{3,3,3}(z_1,z_2)$. 
	In this case we divide domain into three subcases depending on $y$ as follows:
	$\CJ_i$,	
	\[
	\begin{split}
	\CJ_1 \equiv\CJ_1(z_1,z_2) &= \{ (x, y) \in \R^n \times \R^n : x \in \R^n,|y-z_1| \leq 10 \delta \mbox{ and } |y-z_2| \geq \frac{1}{10}\delta \},\\
	\CJ_2\equiv\CJ_2(z_1,z_2) &= \{ (x, y) \in \R^n \times \R^n : x \in \R^n, |y-z_2| \leq 10\delta \mbox{ and } |y-z_1| \geq \frac{1}{10}\delta \},\\
	\CJ_3\equiv\CJ_3(z_1,z_2) &= \left \{ (x, y) \in \R^n \times \R^n : x \in \R^n, \frac{1}{100} |y-z_2| \leq |y-z_1| \leq 100|y-z_2| \mbox{ and } |y-z_1| \geq \frac{1}{100}\delta \right \}.
	\end{split}
	\]	

	Let $(x, y) \in \CA_3 \cap \CB_3 \cap \CI_3 \cap \CJ_1$. Since $|x-y| \aeq |x-z_2|$, 
	\[
	\kappa^{\alpha, \sigma}_1(z_1,z_2) 
	\aleq \frac{|y-z_1|^{s_1-\alpha-n}\, |y-z_2|^{s_2-n}}{|x-y|^{n+2s}}
	\aeq \frac{|y-z_1|^{s_1-\alpha-n}\, |y-z_2|^{s_2-n}}{|x-z_2|^{n+2s}}.
	\]
	Integrating w.r.t. $x$,
	\[
	\begin{split}
	J^{\alpha, \sigma, 1}_{3,3,3}(z_1,z_2)
	&\aleq \int_{\CA_3 \cap \CB_3 \cap \CI_3 \cap \CJ_1}  \int_{|x-z_2|\ageq|y-z_2|}
	\frac{|y-z_1|^{s_1-\alpha-n}\,|y-z_2|^{s_2-n}}{|x-z_2|^{n+2s}} \,dx\,dy\\
	&\aleq \int_{\CA_3 \cap \CB_3 \cap \CI_3 \cap \CJ_1} 
	 |y-z_1|^{s_1-\alpha-n}\,|y-z_2|^{s_2-n}\,|y-z_2|^{-2s} \,dy\\
	&\aeq \int_{\CA_3 \cap \CB_3 \cap \CI_3 \cap \CJ_1} 
	 |y-z_1|^{s_1-\alpha-n}\,|y-z_2|^{-s_1-n} \,dy.
	\end{split}
	\]	
	Let $p>1$ be small enough so that $(s_1-\alpha-n)p < -n$ and $(-s_1-n)p' > -n$. Then, by H\"older's inequality, we obtain that
	\[
	\begin{split}
	J^{\alpha, \sigma, 1}_{3,3,3}(z_1,z_2)
	&\aleq \left(\int_{|y-z_1| \aleq \delta} |y-z_1|^{(s_1-\alpha-n)p}\, dy\right)^\frac{1}{p}\, \left(\int_{|y-z_2| \ageq \delta} |y-z_2|^{(-s_1-n)p'}\, dy\right)^\frac{1}{p'}\\
	&\aleq \left(\delta^{(s_1-\alpha-n)p+n}\right)^\frac{1}{p}\,\left(\delta^{(-s_1-n)p'+n}\right)^\frac{1}{p'}\\
	&\aeq \delta^{-\alpha-n}.
	\end{split}
	\]
	
	Next, we let $(x, y) \in \CA_3 \cap \CB_3 \cap \CI_3 \cap \CJ_2$. Then
	\[
	\kappa^{\alpha, \sigma}_1(z_1,z_2) 
	\aleq \frac{|y-z_1|^{s_1-\alpha-n}\, |y-z_2|^{s_2-n}}{|x-y|^{n+2s}}
	\aeq \frac{|y-z_1|^{s_1-\alpha-n}\, |y-z_2|^{s_2-n}}{|x-z_1|^{n+2s}}
	\]	
	because $|x-y| \aeq |x-z_1|$. Similarly as above case, we integrate w.r.t. $x$ and use H\"older's inequality for $p>1$ such that $\frac{1}{p}+\frac{1}{p'}=1$, $(-s_2-\alpha-n)p < -n$ and $(s_2-n)p' > -n$. It follows that
	\[
	\begin{split}
	J^{\alpha, \sigma, 1}_{3,3,3}(z_1,z_2)
	&\aleq \int_{\CA_3 \cap \CB_3 \cap \CI_3 \cap \CJ_2}  \int_{|x-z_1|\ageq|y-z_1|}
	\frac{|y-z_1|^{s_1-\alpha-n}\,|y-z_2|^{s_2-n}}{|x-z_1|^{n+2s}} \,dx\,dy\\
	&\aleq \int_{\CA_3 \cap \CB_3 \cap \CI_3 \cap \CJ_2} 
	|y-z_1|^{s_1-\alpha-n}\,|y-z_2|^{s_2-n}\,|y-z_1|^{-2s} \,dy\\
	&\aeq \int_{\CA_3 \cap \CB_3 \cap \CI_3 \cap \CJ_2} 
	|y-z_1|^{-s_2-\alpha-n}\,|y-z_2|^{s_2-n} \,dy\\
	&\aleq \left(\int_{|y-z_1| \ageq \delta} |y-z_1|^{(-s_2-\alpha-n)p}\, dy\right)^\frac{1}{p}\, \left(\int_{|y-z_2| \aleq \delta} |y-z_2|^{(s_2-n)p'}\, dy\right)^\frac{1}{p'}\\
	&\aleq \left(\delta^{(-s_2-\alpha-n)p+n}\right)^\frac{1}{p}\,
	\left(\delta^{(s_2-n)p'+n}\right)^\frac{1}{p'}\\
	&\aeq \delta^{-\alpha-n}.
	\end{split}
	\]
	
	For $(x, y) \in \CA_3 \cap \CB_3 \cap \CI_3 \cap \CJ_3$, we have $|y-z_2| \aeq |y-z_1| \ageq \delta$ and so
	\[
	\kappa^{\alpha, \sigma}_1(z_1,z_2) 
	\aleq \frac{|y-z_1|^{s_1-\alpha-n}\, |y-z_2|^{s_2-n}}{|x-y|^{n+2s}}
	\aeq \frac{|y-z_1|^{2s-\alpha-2n}}{|x-z_1|^{n+2s}}.
	\]
	Thus, 
	\[
	\begin{split}
	J^{\alpha, \sigma, 1}_{3,3,3}(z_1,z_2)
	&\aleq \int_{\CA_3 \cap \CB_3 \cap \CI_3 \cap \CJ_3}  \int_{|x-z_1|\ageq|y-z_1|}
	\frac{|y-z_1|^{2s-\alpha-2n}}{|x-z_1|^{n+2s}} \,dx\,dy\\
	&\aleq \int_{|y-z_1| \ageq \delta} 
	|y-z_1|^{2s-\alpha-2n}\,|y-z_1|^{-2s} \,dy\\
	&\aeq \int_{|y-z_1| \ageq \delta} 
	|y-z_1|^{-\alpha-2n} \,dy\\
	&\aeq \delta^{-\alpha-n}.
	\end{split}
	\]
\end{proof}

Let us now prove \Cref{pr:se:1}.
\begin{proof}[Proof of \Cref{pr:se:1}]
	Let $z_1, z_2 \in \R^n$. The first inequality \eqref{eq:CZ:g1} true by \Cref{pr:se:1v2} with $\alpha = 0$. 
	Next, we observe that $A_{K,s_1,s_2}$ may not be symmetric in general (unless $s_1=s_2=s$). However, since for our setup the values of $s_1$ and $s_2$ are interchangeable, \eqref{eq:CZ:g2} and \eqref{eq:CZ:g3} are equivalent. Thus, it suffices to prove \eqref{eq:CZ:g2}. 
	By \Cref{la:mvf}, we have for any $\alpha,\sigma \in [0,1]$,
	\[
	\begin{split}
	&\left | |x-z_1+h|^{s_1-n}-|y-z_1+h|^{s_1-n} - \brac{|x-z_1|^{s-n}-|y-z_1|^{s-n}} \right |\\
	&\aleq |h|^\alpha \brac{\left | |x-z_1+h|^{s-\alpha-n}- |y-z_1+h|^{s-\alpha-n}\right | + \abs{|x-z_1|^{s-\alpha-n}-|y-z_1|^{s-\alpha-n}}}\\
	& +|h|^\alpha \min\{|x-z_1|^{s-\alpha-\sigma-n},|y-z_1|^{s-\alpha-\sigma-n}\}\, |a-b|^\sigma.
	\end{split}
	\]
	We choose $\sigma$ large enough and $\alpha$ small enough so that the assumptions of \Cref{pr:se:1v2} are satisfied.
	
	By assumption, we have $|h| \le \frac{1}{2}|z_1 - z_2|$ and so $|z_1 + h - z_2| \aeq |z_1 - z_2|$. 
	Using \Cref{pr:se:1v2},
	\[
	\begin{split}
	&\left |A_{K,s_1,s_2}(z_1+h,z_2) - A_K(z_1,z_2) \right | \\
	&\aleq |h|^\alpha \abs{\kappa_\alpha(z_1+h, z_2) + \kappa_\alpha(z_1, z_2) + \kappa_\sigma(z_1, z_2) }\\
	&\aleq |h|^\alpha \abs{|z_1 + h - z_2|^{-\alpha-n} + 2|z_1 - z_2|^{-\alpha-n}}\\
	&\aleq |h|^\alpha |z_1 - z_2|^{-\alpha-n}. 
	\end{split}
	\]
	This completes the proof.
\end{proof}

\section{Application to nonlocal PDEs: Proof of Theorem~\ref{th:pdeappl}}\label{s:appl}
\begin{proof}
As shown in \cite{MSY20}, 
	\[
	\mathcal{L}^s_{K} u = g
	\]
	is equivalent to
	\[
	T_{K,s_1,s_2} \laps{s_1} u = c \lapms{s_2} g.
	\]
	Dividing both sides by $\sup K$ we have 
	\[
	T_{\frac{K}{\sup K},s_1,s_2} \laps{s_1} u = c \frac{1}{\sup K}\lapms{s_2} g.
	\]
	and thus
	\[
	\laps{s_1} u + T_{\frac{K}{\sup K}-1,s_1,s_2} \laps{s_1} u = c \frac{1}{\sup K}\lapms{s_2} f.
	\]
	Set $\tilde{K} := \frac{K}{\sup K}-1$, then $\|\tilde{K}\|_{L^\infty} < \eps$. 
	By \Cref{th:cz}, $T_{\tilde{K},s_1,s_2}: L^p(\R^n) \to L^p(\R^n)$ has norm \[
	\|T_{\tilde{K},s_1,s_2}\|_{L^p \to L^p} < 1.                                                  
	\]
	So
	\[
	I - T_{\tilde{K},s_1,s_2}
	\]
	is invertible as an operator from $L^p(\R^n) \to L^p(\R^n)$; indeed the following is a uniformly convergent series
	\[
	(I - T_{\tilde{K},s_1,s_2})^{-1} = \sum_{k=0}^\infty (T_{\tilde{K},s_1,s_2})^k.
	\]
	For $s_1 \in (0,1)$ the solution $\tilde{u} \in \dot{H}^{s_1,p}$ 
	\[
	 \laps{s_1} \tilde{u} = (I - T_{\tilde{K},s_1,s_2})^{-1}\lapms{s_2} g
	\]
    differs from $u$ at most by a constant. So we get the desired estimate taking $g:=\laps{s} f$.
	\end{proof}
\bibliographystyle{abbrv}
\bibliography{bib}

\end{document}